\documentclass[reqno]{amsart}
\usepackage{amssymb}
\usepackage[mathscr]{eucal}
\usepackage{hyperref}
\usepackage{graphicx}
\usepackage[all]{xy}

\theoremstyle{plain}
\newtheorem{prop}{Proposition}[section]
\newtheorem{thm}[prop]{Theorem}
\newtheorem{cor}[prop]{Corollary}
\newtheorem{lem}[prop]{Lemma}

\theoremstyle{definition}

\newtheorem{rem}[prop]{Remark}
\newtheorem{rems}[prop]{Remarks}
\newtheorem{example}[prop]{Example}
\newtheorem{lab}[prop]{}

\newcommand{\isoto}{\overset{\sim}{\to}}

\newcommand{\A}{{\mathbb{A}}}
\newcommand{\C}{{\mathbb{C}}}
\newcommand{\N}{{\mathbb{N}}}

\newcommand{\R}{{\mathbb{R}}}

\newcommand{\m}{{\mathfrak{m}}}

\newcommand{\scrO}{{\mathscr{O}}}
\newcommand{\scrP}{{\mathscr{P}}}

\newcommand{\sfS}{\mathsf{S}}

\DeclareMathOperator{\ch}{char}
\DeclareMathOperator{\Hom}{Hom}

\DeclareMathOperator{\Spec}{Spec}

\DeclareMathOperator{\tr}{tr}

\newcommand{\cone}{\mathrm{cone}}
\newcommand{\conv}{\mathrm{conv}}
\newcommand{\reg}{\mathrm{reg}}
\newcommand{\Sym}{\mathrm{Sym}}

\newcommand{\comp}{\mathbin{\scriptstyle\circ}}
\newcommand{\du}{{\scriptscriptstyle\vee}}
\renewcommand{\setminus}{\smallsetminus}
\newcommand{\ol}{\overline}
\newcommand{\plus}{{\scriptscriptstyle+}}
\newcommand{\wh}[1]{\widehat{#1}}

\newcommand{\all}{\forall\,}
\newcommand{\ex}{\exists\,}
\renewcommand{\subset}{\subseteq}
\renewcommand{\supset}{\supseteq}

\renewcommand{\choose}[2]{\genfrac(){0pt}{}{#1}{#2}}
\newcommand{\bil}[2]{\langle{#1},{#2}\rangle}

\newcommand{\sa}{semi-alge\-braic}


\begin{document}

\title
[Spectrahedral shadows]
{Spectrahedral shadows}

\begin{abstract}
We show that there are many (compact) convex semi-alge\-braic sets in
euclidean space that are not spectrahedral shadows. This
gives a negative answer to a question by Nemirovski, resp.\ it shows
that the Helton-Nie conjecture is false.
\end{abstract}

\author{Claus Scheiderer}
\address
  {Fachbereich Mathematik und Statistik \\
  Universit\"at Konstanz \\
  78457 Konstanz \\
  Germany}
\email
  {claus.scheiderer@uni-konstanz.de}

\keywords
  {Spectrahedral shadows, semidefinite representations, semidefinite
  programming, Helton-Nie conjecture, moment relaxation, convex
  algebraic geometry, real algebraic geometry}

\subjclass[2010]
  {Primary
  90C22,
  secondary
  14P05}

\maketitle


\section*{Introduction}

Semidefinite programming is a far-reaching generalization of linear
programming. While a linear program optimizes a linear function over
a polyhedron, a semi\-definite program optimizes it over a convex
region described by symmetric linear matrix inequalities. Under mild
conditions,
semidefinite programs can be solved in polynomial time up to any
prescribed accuracy.
They have numerous applications in applied mathematics, engineering,
control theory and so forth (see \cite[Chapter~1]{al}).

The feasible regions of semidefinite programs are called
spectrahedral shadows, or also semidefinitely (or SDP) representable
sets.
These are the sets $K\subset\R^n$ that can be written
\begin{equation}\label{semidefrep}%
K\>=\>\Bigl\{\xi\in\R^n\colon\ex\eta\in\R^m\ A+\sum_{i=1}^n\xi_iB_i
+\sum_{j=1}^m\eta_jC_j\succeq0\Bigr\}
\end{equation}
where $m\ge0$, $A,\,B_i,\,C_j$ are real symmetric matrices of the
same size and $M\succeq0$ means that $M$ is positive semidefinite.
Any representation as in \eqref{semidefrep} is called a semidefinite
representation of~$K$.

There has been considerable interest in characterizing
spectrahedral shadows by their geometric properties.
Essentially,
this is the question of what problems in optimization can be modeled
as semidefinite programs. Nemirovski
\cite{ne} in his 2006 plenary address at ICM~Madrid remarked:
``A~seemingly interesting question is to characterize
SDP-representable sets. Clearly, such a set is convex and
semi-algebraic. Is the inverse also true? (\dots) This question
seems to be completely open.'' Helton and Nie (\cite[p.~790]{hn1})
conjectured that the answer is in fact yes, i.e., that every convex
\sa\ set in $\R^n$ is a spectrahedral shadow.

Although the general question has so far been elusive, many results
have been obtained in support of the Helton-Nie conjecture. The class
of spectrahedral shadows is known to be closed under
taking linear images or preimages, finite intersections, or convex
hulls of finite unions (\cite{hn1}, \cite{nsi}). It is also closed
under convex duality respectively polarity, and under taking
topological closures. Helton and
Nie (\cite{hn1}, \cite{hn2}) gave a series of sufficient conditions
for semidefinite representability of a convex \sa\ set $K$, in terms
of curvature conditions for the boundary. Roughly, their results are
saying that when $K$ is compact and its boundary is sufficiently
nonsingular and has strictly positive curvature, then $K$ is a
spectrahedral shadow.
Netzer \cite{nt} proved that the interior of a spectrahedral shadow
is again a spectrahedral shadow, and more generally, that removing
suitably parametrized families of faces from a spectrahedral shadow
results again in a spectrahedral shadow.
By applying the criteria of Helton-Nie,
Netzer and Sanyal \cite{ns} showed that smooth hyperbolicity cones
are spectrahedral shadows.
Scheiderer \cite{sch:sdpcurv} showed that closed convex hulls of
one-dimensional \sa\ sets are always spectrahedral shadows,
and that the Helton-Nie conjecture is true for subsets of the plane.

In addition there are plenty of further results on semidefinite
representations for particular kinds of sets. See, for example,
\cite{gn},
\cite{gpt},
\cite{gwz},
\cite{he},
\cite{nps},
\cite{ni},
\cite{nipast},
\cite{sp},
\cite{spw},
\cite{sch:conv},
and see
\cite{nene},
\cite{btn},
\cite{ne},
\cite[ch.~6]{bpt}
or \cite[ch.~2, 4 and~5]{al}
for surveys on semidefinite representation.

An important general technique for constructing semidefinite
representations was introduced by Lasserre \cite{la}, and
independently by Parrilo \cite{pa}. It is based on a dual relaxation
principle
and is generally known as the moment relaxation method. Starting
with a (basic closed) \sa\ set $S\subset\R^n$, it produces outer
approximations of the convex hull of $S$ that have explicit
semidefinite representations. When $S$ is compact, these
approximations can be made arbitrarily close. Under favorable
conditions, moment relaxation is known to become exact, meaning that
a suitable such approximation coincides with the convex hull of $S$,
up to taking closures.

In this paper we exhibit, for the first time, non-trivial conditions
that are necessary for semidefinite representability. They are based
on semidefinite duality, and they imply that there are no more closed
spectrahedral shadows
than those obtainable from exact moment relaxation in a generalized
sense.
We then use arguments from algebraic geometry, in particular
properties of smooth morphisms of varieties, to show that these
conditions are indeed non-trivial, and to produce
concrete examples of convex sets that fail to be spectrahedral
shadows.
Among them are natural prominent sets like the cone
of non-negative forms of fixed degree in $\R[x_1,\dots,x_n]$, in
every case where this cone is different from the sums of squares cone
(Corollary \ref{psdcone}).
In fact, for every \sa\ set $S\subset\R^n$ of dimension at least two
we prove that there exist polynomial maps $\varphi\colon\R^n\to\R^m$
for which the closed convex hull of $\varphi(S)$ in $\R^m$ has no
semidefinite representation. This is in marked contrast to the case
where $S$ has dimension one, when it is known that the closed convex
hull of $S$ is always a spectrahedral shadow
\cite{sch:sdpcurv}.

For optimization, our results imply that there exist natural
\sa\ optimization problems that cannot be modeled exactly as
semidefinite programs. For example, the problem of minimizing a
general polynomial of degree $d\ge4$ in $n\ge3$ variables (or of
degree $d\ge6$ in $n=2$ variables) over the unit ball in $\R^n$ is of
this sort.

The paper is organized as follows. In Section~2 we recall and
generalize the moment relaxation construction, arriving at general
sufficient conditions for semidefinite representability. In Section~3
we show that the conditions obtained in Section~2 are also necessary.
The main result is Theorem \ref{maininhom}. In Section~4 we present
concrete constructions of closed convex sets that violate the
necessary conditions from Section~3, and we give a few explicit
examples. Finally, Section~5 contains a number of open questions.
\smallskip

\textbf{Acknowledgement.}
The elegant construction for the proof of Proposition \ref{kerneu}
was suggested by Christoph Hanselka. I am most grateful to him for
his kind permission to include his argument here. I am also indebted
to Tim Netzer for comments on a first preliminary version.


\section{Preliminaries and notation}

\begin{lab}
A symmetric matrix $A\in\Sym_d(\R)$ is said to be positive
semidefinite (psd), denoted $A\succeq0$, if all its eigenvalues are
non-negative. If in addition all eigenvalues are nonzero then $A$ is
positive definite, written $A\succ0$. The canonical inner product on
$\Sym_d(\R)$ is denoted $\bil AB=\tr(AB)$, for $A,\,B\in\Sym_d(\R)$.
The set $\Sym_d^\plus(\R)=\{A\in\Sym_d(\R)\colon A\succeq0\}$ is a
closed convex cone in $\Sym_d(\R)$, self-dual with respect to the
inner product. The same terminology applies when the field $\R$ of
real numbers is replaced by a real closed field~$R$.
\end{lab}

\begin{lab}
A set $K\subset\R^n$ is called a \emph{spectrahedron} if there exist
$d\ge1$ and $M_0,\dots,M_n\in\Sym_d(\R)$ such that $K=\{\xi\in\R^n
\colon M_0+\sum_{i=1}^n\xi_iM_i\succeq0\}$. The set $K$ is said to be
a \emph{spectrahedral shadow} (or to have a semidefinite
representation)
if there exists a spectrahedron $S\subset\R^m$ for some $m$ and a
linear map $f\colon\R^m\to\R^n$ such that $K=f(S)$.

Spectrahedra have also been called LMI-sets or LMI-representable
sets. In the plane, spectrahedra are characterized by the former
Lax conjecture, which has been proved by Helton-Vinnikov \cite{hv} in
2007. In higher dimension there exist only conjectural
characterizations of spectrahedra (so-called generalized Lax
conjecture).

Spectrahedral shadows
have as well occured under various different names, such as projected
spectrahedra, SDP representable sets or lifted-LMI representable
sets.
These sets are convex and \sa, but so far no other restrictions were
known.
\end{lab}

\begin{lab}
For $V$ a vector space over a field $k$ we denote the dual space of
$V$ by $V^\du=\Hom_k(V,k)$. Let $V$ be a finite-dimensional
$\R$-vector space. By a (convex) cone $C$ in $V$ we mean a non-empty
set $C\subset V$ with $C+C\subset C$ and $aC\subset C$ for all real
numbers $a\ge0$. Given any set $M\subset V$ let $\conv(M)$ denote the
convex hull of $M$, and let $\cone(M)$ be the convex cone generated
by $M$ (consisting of all finite linear combinations of elements of
$M$ with non-negative coefficients). Moreover, $M^*\subset V^\du=
\Hom_\R(V,\R)$ denotes the (closed convex) cone dual to $M$, i.e.\
$M^*=\{\lambda\in V^\du\colon\all x\in M$ $\lambda(x)\ge0\}$. When
$M$ is a \sa\ set then so are $\conv(M)$ and $\cone(M)$, by
Carath\'eodory's lemma, and also $M^*$.
\end{lab}

\begin{lab}\label{soritvarieties2}%
Given a field $k$, a $k$-algebra is a commutative ring $A$ with a
fixed ring homomorphism $k\to A$. The $k$-algebra $A$ is said to be
finitely generated if it is finitely generated as a ring over~$k$.
The ideal in a ring $A$ generated by a family of elements $a_i\in A$
($i\in I$) is denoted $\langle a_i\colon i\in I\rangle$.

We use standard terminology for algebraic varieties, that we briefly
recall. Generally, we use the language of schemes. See
\ref{ag4dummies} below for informal rephrasings of the most important
concepts in non-technical language. Note however that it would be
most cumbersome and awkward to formulate parts of
Section~\ref{counterex} in such language, which is why we will not
make such an attempt.

Our fields $k$ will be real closed fields, and in particular they
have characteristic zero. All our $k$-varieties will be affine, and
we assume them to be reduced but not necessarily irreducible. Thus,
an affine $k$-variety $V$ is the spectrum of a reduced finitely
generated $k$-algebra $A$, and $A=k[V]=\Gamma(V,\scrO_V)$ is the
affine coordinate ring of~$V$. As usual, $V(k)=\Hom_k(A,k)$ is the
set of $k$-rational points of $V$. Given $\xi\in V(k)$, the local
ring of $V$ at $\xi$ is denoted $\scrO_{V,\xi}$. By $\m_{V,\xi}$ we
invariantly denote both the maximal ideal of $\scrO_{V,\xi}$ and its
preimage in $k[V]$. Given a morphism $\phi\colon V\to W$ of affine
$k$-varieties, the associated homomorphism $k[W]\to k[V]$ of
$k$-algebras is denoted~$\phi^*$. Given a field extension $K/k$ we
write $V_K:=V\times_k\Spec(K)$ for the base field extension of $V$,
and similarly $\phi_K\colon V_K\to W_K$ for the base field extension
of~$\phi$. Upon identifying $K[V]=k[V]\otimes_kK$ we have
$\phi_K^*=\phi^*\otimes1$ for the induced map $K[Y]\to K[X]$.
\end{lab}

\begin{lab}\label{ag4dummies}%
To make the paper more accessible to readers who are not familiar
with the basic notions of algebraic geometry, here are some
explanations. Let $k$ be either be the field $\R$ of real numbers or
a real closed extension thereof, and let $\ol k$ be an algebraic
closure of~$k$. Informally speaking, an affine $k$-variety can be
thought of as a subset $V\subset\ol k^n$ (for some $n$) that can be
described by polynomial equations with coefficients in~$k$, and that
is equipped with the $k$-Zariski topology (the closed subsets of
which are the affine $k$-varieties contained in $V$). Given a second
affine $k$-variety $W\subset\ol k^m$, a morphism $\phi\colon V\to W$
of $k$-varieties corresponds to a map $V\to W$ given component-wise
by $k$-polynomials. Associated with $V$ is its affine coordinate ring
$k[V]=k[x_1,\dots,x_n]/I_V$, where $I_V$ is the ideal of polynomials
vanishing identically on $V$. Associated with $\phi$ is the
(``pull-back'') ring homomorphism $\phi^*\colon k[W]\to k[V]$ induced
by $\phi$ in the obvious way. The set of $k$-rational points of $V$
is $V(k):=k^n\cap V$. Given $\xi\in V(k)$, let $\m_{V,\xi}=
\{f\in k[V]\colon f(\xi)=0\}$, a maximal ideal of $k[V]$. The local
ring of $V$ at $\xi$ is by definition the localization
$\scrO_{V,\xi}=k[V]_{\m_{V,\xi}}$ of $k[V]$ at its maximal ideal
$\m_{V,\xi}$. Given a field extension $K/k$, the base field extension
$V_K$ of $V$ is the affine $K$-variety in $\ol K^n$ defined by the
same $k$-polynomials as $V$, and it has coordinate ring
$K[V]=k[V]\otimes_kK$.

For example, affine $n$-space is the affine $k$-variety
$\A^n=\ol k^n$ and has $k[\A^n]=k[x_1,\dots,x_n]=k[x]$ and
$\A^n(k)=k^n$. Given $\xi\in k^n$, the local ring $\scrO_{\A^n,\xi}$
is the ring of fractions $\frac fg$ with $f,\,g\in k[x]$ and
$g(\xi)\ne0$.
\end{lab}

\begin{lab}
When $V$ is an affine variety over $k=\R$, we will always equip the
set $V(\R)$ of $\R$-rational points with the euclidean topology
(induced by $V(\R)\subset\R^n$ if $V\subset\A^n$ is a closed
subvariety). This topology is independent of the choice of a closed
embedding $V\subset\A^n$. A subset $S\subset V(\R)$ is called \sa\
if it can be
written as a finite boolean combination of sets of the form
$\{\xi\in V(\R)\colon f(\xi)>0\}$ with $f\in\R[V]$.
\end{lab}

\begin{lab}\label{linsystmorph}%
Let $V$ be an affine $k$-variety, and let $L\subset k[V]$ be a
$k$-linear subspace of finite dimension. Let $\sfS^\bullet L=
\bigoplus_{d\ge0}\sfS^dL$ be the symmetric $k$-algebra over $L$, and
let $\sfS^\bullet L\to k[V]$ be the natural $k$-homomorphism induced
by the inclusion $L\subset k[V]$. The associated morphism of affine
$k$-varieties will be denoted $\varphi_L\colon V\to\A_L$, where
$\A_L:=\Spec(\sfS^\bullet L)$ is the affine space with coordinate
ring $\sfS^\bullet L$. In plainer terms, upon fixing a linear basis
$g_1,\dots,g_m$ of $L$, we may identify $\varphi_L$ with the map
$V\to\A^m$ given by $\xi\mapsto(g_1(\xi),\dots,g_m(\xi))$. Note that
$\A_L(k)=L^\du=\Hom_k(L,k)$, the linear space dual to~$L$, so that on
$k$-rational points the map $\varphi_L\colon V(k)\to L^\du$ sends
$\xi\in V(k)$ to the evaluation map $L\to k$ at~$\xi$.
\end{lab}


\section{Sufficient conditions for semidefinite representability}

Let $V$ be an affine $\R$-variety. Given a \sa\ subset $S\subset
V(\R)$ we write $\scrP(S)=\{f\in\R[V]\colon f\ge0$ on~$S\}$.

\begin{lab}\label{recalllass}%
We start by informally recalling the moment relaxation construction,
due to Lasserre \cite{la} and independently Parrilo \cite{pa}.
Let $L\subset\R[V]$ be a linear subspace with $\dim(L)=m<\infty$, and
let $\varphi_L\colon V\to\A_L\cong\A^m$ be the associated morphism,
see \ref{linsystmorph}. Assume that $S\subset V(\R)$ is a basic
closed \sa\ set, say $S=\{\xi\in V(\R)\colon h_i(\xi)\ge0$
($i=1,\dots,r)\}$ where $h_1,\dots,h_r\in\R[V]$. We are trying to
find a semidefinite representation of the convex hull $K$ of
$\varphi_L(S)$ in $\A_L(\R)=L^\du\cong\R^m$, or at least an
approximate such representation.

Without any serious restriction we can assume $1\notin L$.
Fix a sequence $W_0,\dots,W_r$ of finite-dimensional linear
subspaces of $\R[V]$. Any $f\in L_1:=\R1+L\subset\R[V]$ that has a
representation $f=s_0+\sum_{i=1}^rs_ih_i$ with $s_i$ a sum of squares
of elements of $W_i$ ($i=0,\dots,r$) is obviously non-negative on $S$.
So the set of all such $f$ is a convex cone $C=C(W_0,\dots,W_r)$,
contained in $L_1\cap\scrP(S)$. By construction, the dual cone
$C^*\subset L_1^\du$ has an explicit semidefinite representation.
For $\lambda\in L^\du$ let $\lambda'\in L_1^\du$ be defined by
$\lambda'|_L=\lambda$ and by $\lambda'(1)=1$. The set
$K'=K'(W_0,\dots,W_r)$ of all $\lambda\in\A_L(\R)=L^\du$ for which
$\lambda'\in C^*$ is a closed spectrahedral shadow that contains $K$.
Enlarging the spaces $W_i$, or adding more inequalities $h_i$ to the
description of $S$, results in $K'$ getting smaller,
and therefore becoming a closer approximation to~$K$. Of particular
interest is the case where $C=L_1\cap\scrP(S)$.
This condition is usually rephrased by saying that the linear
polynomials non-negative on $\varphi_L(S)$ (i.e.\ the elements of
$L_1\cap\scrP(S)$) have weighted sum of squares representations of
uniformly bounded degrees (the ``degree bounds'' being given by the
subspaces $W_i$). The moment relaxation is \emph{exact} in this case,
which means that $K'=\ol K$, the closure of $K$. Therefore, under
the assumption $C=L_1\cap\scrP(S)$, the closure $\ol K$ is a
spectrahedral shadow.
\end{lab}

We now generalize this procedure, to arrive at a general sufficient
condition for semidefinite representability.
First two auxiliary lemmas.

\begin{lem}\label{closedconvhullsorit}%
Let $V$ be an affine $\R$-variety, and let $S\subset V(\R)$ be a \sa\
set. Let $L\subset\R[V]$ be a finite-dimensional linear subspace with
$1\notin L$, and write $L_1:=\R1+L$.
\begin{itemize}
\item[(a)]
$\varphi_L(S)$ is a \sa\ subset of $L^\du$.
\item[(b)]
The closed convex hull $\ol{\conv(\varphi_L(S))}$ of $\varphi_L(S)$
in $L^\du$ consists of all $\lambda\in L^\du$ that satisfy
$\lambda'(g)\ge0$ for every $g\in L_1\cap\scrP(S)$.
\item[(c)]
The closed conic hull $\ol{\cone(\varphi_L(S))}$ of $\varphi_L(S)$
in $L^\du$ consists of all $\lambda\in L^\du$ that satisfy
$\lambda(g)\ge0$ for every $g\in L\cap\scrP(S)$.
\end{itemize}
\end{lem}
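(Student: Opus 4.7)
\bigskip

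\textbf{Proof plan.}

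Part~(a) is a direct application of the Tarski--Seidenberg theorem. After fixing a basis $g_1,\dots,g_m$ of $L$, the map $\varphi_L\colon V(\R)\to L^\du\cong\R^m$ is given componentwise by polynomials in $\R[V]$, so the image of any \sa\ subset of $V(\R)$ is \sa\ in $L^\du$.

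For (b) and (c) I would invoke the standard finite-dimensional separation principle: for any subset $M$ of the finite-dimensional real vector space $L^\du$, the closed convex hull $\ol{\conv(M)}$ is the intersection of all closed affine half-spaces of $L^\du$ containing $M$, and the closed conic hull $\ol{\cone(M)}$ is the intersection of all closed linear half-spaces (i.e.~half-spaces bounded by a hyperplane through the origin) containing $M$. This is a consequence of Hahn--Banach, or equivalently of the bipolar theorem. The remaining task is to identify these half-spaces with elements of $L_1\cap\scrP(S)$ and $L\cap\scrP(S)$ respectively.

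Since $1\notin L$, the sum $L_1=\R\cdot 1\oplus L$ is direct, so every $f\in L_1$ has a unique decomposition $f=g+c$ with $g\in L$ and $c\in\R$, and the formula $\lambda'|_L=\lambda$, $\lambda'(1)=1$ unambiguously defines an extension $\lambda'\in L_1^\du$ of any $\lambda\in L^\du$. A closed affine half-space in $L^\du$ has the form $H=\{\lambda\in L^\du\colon\lambda(g)+c\ge0\}$ for some $g\in L$ and $c\in\R$; setting $f=g+c\in L_1$, one has $\lambda(g)+c=\lambda'(f)$ for all $\lambda$, and evaluating at $\lambda=\varphi_L(\xi)$ gives the value $g(\xi)+c=f(\xi)$. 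Hence $\varphi_L(S)\subset H$ if and only if $f\ge0$ on $S$, i.e.~$f\in L_1\cap\scrP(S)$. Intersecting over all such $H$ yields exactly the set described in (b). The argument for (c) is identical after dropping the constant term: closed linear half-spaces of $L^\du$ are precisely the sets $\{\lambda\colon\lambda(g)\ge0\}$ for $g\in L$, and this half-space contains $\varphi_L(S)$ iff $g\in L\cap\scrP(S)$.

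There is no genuine obstacle here; the only thing to be careful about is the bookkeeping between $L$ and $L_1$ (use of the hypothesis $1\notin L$) and the observation that the bipolar theorem applies because $L^\du$ is finite-dimensional, so closed convex sets are weakly closed and coincide with the intersection of their enclosing half-spaces.
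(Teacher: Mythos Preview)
Your proposal is correct and follows essentially the same route as the paper: part~(a) via Tarski--Seidenberg, parts~(b) and~(c) via convex duality (the bipolar/separation principle). The paper's own proof is in fact a one-line sketch (``(a) follows from the Tarski--Seidenberg theorem, and (b), (c) are consequences of convex duality''), so your write-up simply unpacks the same argument, including the identification of affine (resp.\ linear) half-spaces of $L^\du$ with elements of $L_1\cap\scrP(S)$ (resp.\ $L\cap\scrP(S)$) and the role of the hypothesis $1\notin L$.
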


\begin{proof}
In (b), $\lambda'\in L_1^\du$ denotes the extension of $\lambda\in
L^\du$ defined by $\lambda'(1)=1$, see \ref{recalllass}. (a) follows
from the Tarski-Seidenberg theorem, and (b), (c) are consequences of
convex duality.
\end{proof}

\begin{lem}\label{sosistsdp}%
Let $A$ be an $\R$-algebra, let $U\subset A$ be a linear subspace
with $\dim(U)<\infty$, and let $UU$ be the linear subspace of $A$
spanned by all products $uu'$ ($u,\,u'\in U$). Then the cone
$\Sigma U^2$,
consisting of all finite sums of squares of elements of $U$, is a
spectrahedral shadow
in~$UU$.
\end{lem}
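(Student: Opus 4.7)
The plan is to exhibit $\Sigma U^2$ explicitly as the linear image of a psd cone, which by definition makes it a spectrahedral shadow.

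First I would fix a basis $e_1,\dots,e_d$ of $U$, where $d=\dim(U)<\infty$, and consider the linear map
\[
\pi\colon\Sym_d(\R)\to UU,\qquad \pi(M)=\sum_{j,k=1}^d M_{jk}\,e_je_k.
\]
This map lands in $UU$ by the very definition of $UU$. The key claim is that $\pi\bigl(\Sym_d^\plus(\R)\bigr)=\Sigma U^2$. For the forward inclusion, any psd matrix $M\succeq0$ admits a decomposition $M=\sum_i v_iv_i^T$ with vectors $v_i=(v_{i1},\dots,v_{id})\in\R^d$, and then
\[
\pi(M)=\sum_i\sum_{j,k}v_{ij}v_{ik}\,e_je_k=\sum_i\Bigl(\sum_j v_{ij}e_j\Bigr)^{\!2}\in\Sigma U^2.
\]
Conversely, given $f=\sum_i u_i^2\in\Sigma U^2$ with $u_i=\sum_j c_{ij}e_j\in U$, I would set $M:=C^TC\succeq0$ where $C=(c_{ij})$, and verify $\pi(M)=f$ by the same expansion.

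Once this identification is established, the result follows immediately: the cone $\Sym_d^\plus(\R)$ is a spectrahedron (in fact the prototypical one), $\pi\colon\Sym_d(\R)\to UU$ is linear, and the class of spectrahedral shadows is closed under linear images. Hence $\Sigma U^2=\pi(\Sym_d^\plus(\R))$ is a spectrahedral shadow in $UU$.

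I do not anticipate any real obstacle; the only subtle point is to ensure $\pi$ is presented as a map into $UU$ rather than into the whole algebra $A$ (so that the resulting semidefinite representation genuinely lives in the stated ambient space), which is handled by the definition of $UU$ as the span of the products $e_je_k$.
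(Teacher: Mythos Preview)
Your proof is correct and matches the paper's argument essentially verbatim: the paper also fixes a basis of $U$, defines the same linear map $\Sym_d(\R)\to UU$ sending $(a_{ij})$ to $\sum_{i,j}a_{ij}u_iu_j$, and asserts that its image on the psd cone is exactly $\Sigma U^2$. You have simply spelled out the two inclusions that the paper leaves implicit.
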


\begin{proof}
Choose a linear basis $u_1,\dots,u_n$ of $U$. The linear map
$f\colon\Sym_n(\R)\to UU$, $(a_{ij})\mapsto\sum_{i,j}a_{ij}u_iu_j$
satisfies $\Sigma U^2=f(\Sym_n^\plus(\R))$, which shows the claim.
\end{proof}

We keep fixing an affine $\R$-variety $V$, a \sa\ set $S\subset
V(\R)$ and a finite-dimensional linear subspace $L\subset\R[V]$. As
before write $L_1=L+\R1$.

\begin{prop}\label{umker}%
Let $\phi_i\colon X_i\to V$ ($i=1,\dots,m$) be finitely many
morphisms of affine $\R$-varieties. For every $i=1,\dots,m$ let
$U_i\subset\R[X_i]$ be a finite-dimensional linear subspace, and
assume that the following two conditions hold:
\begin{itemize}
\item[(1)]
$S\subset\phi_i(X_i(\R))$ for $i=1,\dots,m$;
\item[(2)]
for every $f\in L_1\cap\scrP(S)$ there exists $i\in\{1,\dots,m\}$
such that $\phi_i^*(f)\in\R[X_i]$ is a sum of squares of elements
of $U_i$ (in $\R[X_i]$).
\end{itemize}
Then $\ol{\conv(\varphi_L(S))}$, the closed convex hull of
$\varphi_L(S)$ in $\A_L(\R)=L^\du$, is a spectrahedral shadow.
\end{prop}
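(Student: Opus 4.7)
The plan is to construct an explicit linear matrix inequality whose projection gives a spectrahedral shadow sandwiched between $\conv(\varphi_L(S))$ and its closure $K := \ol{\conv(\varphi_L(S))}$, and then invoke the fact (cited in the introduction) that the closure of a spectrahedral shadow is again a spectrahedral shadow.

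For each $i$, fix a basis $u^{(i)}_1,\dots,u^{(i)}_{n_i}$ of $U_i$, let $W_i := \phi_i^*(L_1) + U_i\cdot U_i \subset\R[X_i]$, and define
\[
\tilde K := \Bigl\{(\lambda,\mu_1,\dots,\mu_m)\in L^\du\times\prod_i W_i^\du :
\mu_i\comp\phi_i^*|_{L_1} = \lambda'\ \all i,\
\bigl(\mu_i(u^{(i)}_ju^{(i)}_k)\bigr)_{j,k} \succeq0\ \all i
\Bigr\}.
\]
The compatibility $\mu_i\comp\phi_i^*|_{L_1}=\lambda'$ is a (finite) system of linear equalities in the coordinates of $\lambda$ and $\mu_i$ (using that $L_1$ is finite-dimensional), while the PSD moment-matrix conditions are linear matrix inequalities in $\mu_i$. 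Thus $\tilde K$ is a spectrahedron, and its image $\pi(\tilde K)\subset L^\du$ under the coordinate projection is a spectrahedral shadow.

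The heart of the proof is then to verify the two inclusions $\varphi_L(S)\subset\pi(\tilde K)\subset K$. For the first, pick $\xi\in S$; condition~(1) supplies $\eta_i\in X_i(\R)$ with $\phi_i(\eta_i)=\xi$, and letting $\mu_i$ be evaluation at $\eta_i$ on $W_i$, one checks that $\mu_i(\phi_i^*f) = f(\xi) = \lambda'(f)$ for $\lambda=\varphi_L(\xi)$, while the moment matrix becomes the rank-one Gram matrix $\bigl(u^{(i)}_j(\eta_i)u^{(i)}_k(\eta_i)\bigr)_{j,k}\succeq0$. For the second, let $(\lambda,\mu_1,\dots,\mu_m)\in\tilde K$ and take any $f\in L_1\cap\scrP(S)$; condition~(2) yields $i_0$ and a PSD matrix $A=(A_{jk})$ with $\phi_{i_0}^*(f)=\sum_{j,k}A_{jk}u^{(i_0)}_ju^{(i_0)}_k$, whence $\lambda'(f)=\mu_{i_0}\bigl(\phi_{i_0}^*(f)\bigr)=\tr(A\cdot M_{i_0})\ge0$ by self-duality of the PSD cone, where $M_{i_0}$ is the (PSD) moment matrix of $\mu_{i_0}$. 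Lemma~\ref{closedconvhullsorit}(b) then gives $\lambda\in K$.

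Since $\pi(\tilde K)$ is convex, combining the two inclusions gives $\conv(\varphi_L(S))\subset\pi(\tilde K)\subset K$, and taking closures yields $\ol{\pi(\tilde K)}=K$. The closure of a spectrahedral shadow is a spectrahedral shadow, so $K$ is a spectrahedral shadow as claimed. I expect the only subtle point to be avoiding a Hahn--Banach type extension of moment functionals: this is precisely why one sandwiches $\pi(\tilde K)$ between $\conv(\varphi_L(S))$ and $K$ and passes to the closure at the end, rather than trying to identify $\pi(\tilde K)$ with $K$ directly (which would fail in general because the cone $\Sigma U_i^2$ need not be closed).
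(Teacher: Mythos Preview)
Your proof is correct. It is essentially the moment-side (primal) unwinding of the paper's sos-side (dual) argument, so the two are closely related but organized differently.

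The paper works as follows: for each $i$ set $C_i:=\{f\in L_1:\phi_i^*(f)\in\Sigma U_i^2\}$, observe that each $C_i$ is a spectrahedral shadow in $L_1$ (preimage of the sos cone, using Lemma~\ref{sosistsdp}), and use conditions (1) and (2) to conclude $L_1\cap\scrP(S)=\bigcup_iC_i$ exactly. Dualizing gives $C^*=\bigcap_iC_i^*$, each $C_i^*$ is a spectrahedral shadow (dual of a spectrahedral shadow cone), hence so is $C^*$, and $\ol K$ is the affine preimage of $C^*$ under $\lambda\mapsto\lambda'$. In particular $\ol K$ is identified \emph{on the nose}, with no closure step needed at the end.

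Your construction writes down directly the spectrahedron that lies behind the representation of $\bigcap_iC_i^*$: the condition ``$\lambda'\in C_i^*$'' is relaxed to the (a priori stronger) extension condition ``there exists $\mu_i\in W_i^\du$ with $\mu_i\comp\phi_i^*|_{L_1}=\lambda'$ and psd moment matrix'', which is manifestly an LMI. Since that relaxation need not be an equality (the sos cone $\Sigma U_i^2$ need not be closed, as you correctly note), you only get a sandwich $\conv(\varphi_L(S))\subset\pi(\tilde K)\subset\ol K$ and must invoke closure-stability of spectrahedral shadows at the end. The trade-off: the paper's argument is shorter and uses the standard closure properties (preimage, dual, intersection) as black boxes, whereas yours is fully explicit and avoids those, at the cost of needing the closure fact instead.
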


\begin{proof}
Write $C:=L_1\cap\scrP(S)$, which is a closed convex cone in $L_1$.
For a given index $i\in\{1,\dots,m\}$ let $C_i\subset L_1$ be the
cone of all $f\in L_1$ for which $\phi_i^*(f)$ is a sum of squares of
elements of $U_i$ in $\R[X_i]$. By Lemma \ref{sosistsdp}, and since
linear preimages of spectrahedral shadows are again spectrahedral
shadows,
$C_i$ is a spectrahedral shadow in~$L_1$.
By condition (1), elements of $C_i$ are non-negative on $S$, which
means $C_i\subset C$.
Therefore $C=\bigcup_{i=1}^mC_i$ by~(2), and hence we have
$C^*=\bigcap_{i=1}^mC_i^*$ for the dual cones.
For every index $i$ the cone $C_i^*$, being the dual cone to a
spectrahedral shadow cone, is itself a spectrahedral shadow. So it
follows that $C^*$ is a spectrahedral shadow in $L_1^\du$.

For the convex hull $K:=\conv(\varphi_L(S))\subset L^\du$ we have
$\ol K=\{\lambda\in L^\du\colon\lambda'\in C^*\}$, see Lemma
\ref{closedconvhullsorit}(b).
So $\ol K$ is the preimage of the spectrahedral shadow $C^*$ under
the affine-linear map $L^\du\to L_1^\du$, $\lambda\mapsto\lambda'$,
and hence is a spectrahedral cone, as asserted.
\end{proof}

\begin{cor}\label{homumker}%
If, in Proposition \ref{umker}, condition (2) is only required to
hold for every $f\in L\cap\scrP(S)$, then $\ol{\cone(\varphi_L(S))}$,
the closed convex cone generated by $\varphi_L(S)$, is a
spectrahedral shadow.
\end{cor}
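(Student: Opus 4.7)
The plan is to mimic the proof of Proposition \ref{umker} verbatim, but working directly in $L$ rather than in the affine extension $L_1=L+\R1$, and invoking part~(c) of Lemma \ref{closedconvhullsorit} in place of part~(b). This is cleaner since no extension to $L_1$ is needed, and the final step (pullback along $\lambda\mapsto\lambda'$) disappears.

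Concretely, set $C:=L\cap\scrP(S)$, a closed convex cone in~$L$. For each $i=1,\dots,m$, define
\[
C_i\>:=\>\{f\in L\colon\phi_i^*(f)\text{ is a sum of squares of elements of }U_i\text{ in }\R[X_i]\}.
\]
By Lemma \ref{sosistsdp} the cone $\Sigma U_i^2$ is a spectrahedral shadow in $U_iU_i$, and $C_i$ is its preimage under the linear map $L\to U_iU_i$, $f\mapsto\phi_i^*(f)$. Since spectrahedral shadows are closed under linear preimages, each $C_i$ is a spectrahedral shadow in~$L$. Condition~(1) gives $C_i\subset C$ (any sum of squares pulled back along $\phi_i$ is non-negative on $\phi_i(X_i(\R))\supset S$), and the modified condition~(2) gives the reverse inclusion $C\subset\bigcup_{i=1}^mC_i$. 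Hence $C=\bigcup_{i=1}^mC_i$, so on passing to duals
\[
C^*\>=\>\bigcap_{i=1}^mC_i^*.
\]

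Each $C_i^*\subset L^\du$ is a spectrahedral shadow, being the dual of a spectrahedral shadow cone (a fact recalled in the introduction), and finite intersections of spectrahedral shadows are again spectrahedral shadows. Therefore $C^*$ is a spectrahedral shadow in $L^\du$. Finally, Lemma \ref{closedconvhullsorit}(c) identifies
\[
\ol{\cone(\varphi_L(S))}\>=\>\{\lambda\in L^\du\colon\all g\in C\ \lambda(g)\ge0\}\>=\>C^*,
\]
which completes the proof.

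There is no real obstacle: the argument is a direct translation of the proof of Proposition \ref{umker} into the homogeneous (cone) setting. The only point to verify carefully is that the conic counterpart of Lemma \ref{closedconvhullsorit}(b) is available — which it is, as part~(c) — so that one can conclude directly at the level of $L^\du$ without detouring through $L_1^\du$ and the affine section $\lambda\mapsto\lambda'$.
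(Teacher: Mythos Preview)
Your proof is correct and is exactly the approach the paper takes: it says the proof is ``completely analogous to the proof of \ref{umker}, defining the respective cones $C$ and $C_i$ to be subcones of $L$ instead of $L_1$, and applying Lemma \ref{closedconvhullsorit}(c).'' One tiny imprecision: the map $f\mapsto\phi_i^*(f)$ need not land in $U_iU_i$ in general, so strictly speaking $C_i$ is the preimage of $\Sigma U_i^2$ under the linear map $L\to\phi_i^*(L)+U_iU_i$ (or simply $L\to\R[X_i]$, restricted to a finite-dimensional subspace containing both); this does not affect the argument and the paper is equally informal on this point.
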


\begin{proof}
The proof is completely analogous to the proof of \ref{umker},
defining the respective cones $C$ and $C_i$ to be subcones of $L$
instead of $L_1$, and applying Lemma \ref{closedconvhullsorit}(c).
\end{proof}

The following examples and remarks illustrate Proposition
\ref{umker}.

\begin{rems}
\hfil
\smallskip

1.\
Proposition \ref{umker} can be seen as a generalization of the moment
relaxation construction. To explain this, assume that we are in the
situation of \ref{recalllass}, in particular $S=\{\xi\in V(\R)\colon
h_i(\xi)\ge0$ $(i=1,\dots,r)\}$ with $h_i\in\R[V]$. Let $X$ be the
affine $\R$-variety
obtained by formally adjoining square roots of $h_1,\dots,h_r$ to
$\R[V]$, i.e.\ $\R[X]=\R[V][t_1,\dots,t_r]/\langle t_i^2-h_i$,
$i=1,\dots,r\rangle$, and let $\phi\colon X\to V$ be the natural map.
Then clearly $\phi(X(\R))=S$. If subspaces $W_i\subset\R[V]$ as in
\ref{recalllass} have been found such that the sufficient exactness
condition from \ref{recalllass} is satisfied, i.e.\ if $L_1\cap
\scrP(S)=C(W_0,\dots,W_r)$ (in the notation of \ref{recalllass}),
this implies that for every $f\in L_1\cap\scrP(S)$ the pull-back
$\phi^*(f)\in\R[X]$ is a sum of squares in $\R[X]$ of elements from
the subspace $U:=\phi^*(W_0)+\sum_{i=1}^r\phi^*(W_i)\sqrt{h_i}$ of
$\R[X]$. So under this assumption, the conditions of Proposition
\ref{umker} are fulfilled with $m=1$ and these particular choices of
$\phi$ and~$U$.%
\smallskip

2.\
Conversely, the more general construction of a semidefinite
representation in Proposition \ref{umker}
is achieved essentially by reduction to a construction of moment
relaxation type, as in \ref{recalllass}. We leave it to the reader to
make this statement precise.%
\smallskip

3.\
The proof of Proposition \ref{umker} is constructive in the following
sense. If the morphisms $\phi_i\colon X_i\to V$ as well as the linear
subspaces $U_i\subset\R[X_i]$ are given explicitly, we can deduce
from this data an explicit semidefinite representation of
$\ol{\conv(\varphi_L(S))}$.
\end{rems}

\begin{example}
Let $C$ be a nonsingular affine curve over $\R$ for which $C(\R)$ is
compact. Let $L\subset\R[C]$ be a finite-dimensional linear subspace,
and consider the associated map $\varphi_L\colon C(\R)\to\A_L(\R)=
L^\du$. By \cite[Corollary 4.4]{sch:sdpcurv} there exists a
finite-dimensional linear subspace
$U\subset\R[C]$ such that every $f\in L+\R1$ that is non-negative on
$C(\R)$ is a sum of squares of elements from $U$. Using this fact,
Proposition \ref{umker} applies with $m=1$ and $\phi\colon X\to C$
the identity map of $C$, showing that the convex hull of
$\varphi_L(C(\R))$ in $L^\du$ is a spectrahedral shadow.
(This consequence was already drawn in \cite{sch:sdpcurv}.)
\end{example}

\begin{rem}
Later (Remark \ref{uniform} below) we'll see that it is not enough in
Proposition \ref{umker} to replace condition (2) by the weaker
condition that every $f\in L_1\cap\scrP(S)$ becomes a sum of squares
in one of the $\R[X_i]$. Rather, it is essential that such sum of
squares representations exist in a uniform way.
\end{rem}


\section{Necessary conditions for semidefinite representability}

In the previous section we stated sufficient conditions for
semidefinite representability. We now show that these conditions are
also necessary.
In the sequel let $x=(x_1,\dots,x_n)$ be a tuple of variables. We
start by recalling one form of duality in semidefinite programming
(see \cite{ra}):

\begin{prop}\label{dualspectrcone}%
Let $M_1,\dots,M_n\in\Sym_d(\R)$, write $M(\xi)=\sum_{i=1}^n
\xi_iM_i$ for $\xi\in\R^n$, and let $C=\{\xi\in\R^n\colon M(\xi)
\succeq0\}$ be the associated spectrahedral cone. Assume that
$M(\xi^0)\succ0$ for some $\xi^0\in\R^n$. Then the dual cone of $C$
has the following semidefinite representation:
$$C^*\>=\>\Bigl\{\Bigl(\bil B{M_1},\dots,\bil B{M_n}\Bigr)\colon
B\in\Sym_d(\R),\ B\succeq0\Bigr\}\>\subset\>\R^n.\eqno\square$$
\end{prop}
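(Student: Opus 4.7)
The plan is to denote
$$D \>:=\> \Bigl\{\bigl(\bil B{M_1},\dots,\bil B{M_n}\bigr)\colon
B\in\Sym_d(\R),\ B\succeq0\Bigr\}\subset\R^n$$
and prove $C^*=D$ in two steps: first $C^*=\ol D$ by general convex duality (no Slater needed), and then $D=\ol D$ using the Slater hypothesis $M(\xi^0)\succ0$.

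For the first step, the inclusion $D\subset C^*$ is immediate: for $B\succeq0$ and $\xi\in C$ one has $\sum_i\xi_i\bil B{M_i}=\bil B{M(\xi)}\ge0$ by self-duality of $\Sym_d^\plus(\R)$, and taking closure gives $\ol D\subset C^*$. For the reverse direction I would use Hahn-Banach separation: if $\lambda\in C^*\setminus\ol D$, then some $\xi\in\R^n$ satisfies $\bil\xi\mu\ge0$ for all $\mu\in\ol D$ but $\bil\xi\lambda<0$. Specializing the first inequality to $\mu=\bigl(\bil B{M_i}\bigr)_i$ with arbitrary $B\succeq0$ rewrites as $\bil B{M(\xi)}\ge0$ for all $B\succeq0$, hence $M(\xi)\succeq0$ by self-duality again, i.e.\ $\xi\in C$. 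But then $\bil\xi\lambda\ge0$ by $\lambda\in C^*$, a contradiction.

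For closedness of $D$, suppose $B_k\succeq0$ with $\bigl(\bil{B_k}{M_i}\bigr)_i\to\lambda$. If $\|B_k\|$ is unbounded, I would pass to a subsequence with $\|B_k\|\to\infty$ and set $\wt B_k:=B_k/\|B_k\|$. A further subsequence converges to some $\wt B\succeq0$ with $\|\wt B\|=1$ and $\bil{\wt B}{M_i}=0$ for all $i$. Pairing against $\xi^0$ then yields $\bil{\wt B}{M(\xi^0)}=0$, and since $M(\xi^0)\succ0$ admits an invertible square root this forces $\wt B=0$ (write $\bil{\wt B}{M(\xi^0)}=\tr\bigl(M(\xi^0)^{1/2}\wt B\,M(\xi^0)^{1/2}\bigr)$), contradicting $\|\wt B\|=1$. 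Hence $\|B_k\|$ is bounded, a convergent subsequence produces a psd limit $B$ with $\bil B{M_i}=\lambda_i$, and so $\lambda\in D$.

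The essential obstacle is this closedness step: linear images of closed convex cones need not be closed in general, and the Slater condition $M(\xi^0)\succ0$ is precisely the standard remedy, since strict positivity interacts with any nonzero psd accumulation direction of $B_k/\|B_k\|$ to yield a strictly positive inner product, ruling out escape to infinity.
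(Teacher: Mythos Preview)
Your proof is correct and complete; it is the standard strong-duality argument for semidefinite programming under Slater's condition. The paper itself does not supply a proof of this proposition: it simply quotes the result as a known form of SDP duality, with a reference to Ramana \cite{ra}, and places a $\square$ at the end of the statement. So there is nothing in the paper to compare against, and your self-contained argument fills in exactly what the paper leaves to the literature.
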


\begin{prop}\label{kerneu}%
Assume that $S\subset\R^n$ is a \sa\ set for which the closed conical
hull $\ol{\cone(S)}\subset\R^n$ of $S$ is a spectrahedral shadow.
Then there exists a morphism $\phi\colon X\to\A^n$ of
affine $\R$-varieties, together with a finite-dimensional $\R$-linear
subspace $U$ of $\R[X]$, such that $S\subset\phi(X(\R))$ and the
following holds: For every homogeneous linear polynomial $f\in\R[x]$
with $f\ge0$ on $S$, the pull-back $\phi^*(f)\in\R[X]$ is a sum of
squares of elements from~$U$.
\end{prop}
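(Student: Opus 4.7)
The plan is to combine duality in semidefinite programming with a Cholesky-style parametrization. Since $\ol{\cone(S)}$ is a spectrahedral shadow and the class of spectrahedral shadows is closed under polarity, the dual cone $S^*=\ol{\cone(S)}^*$ is again a spectrahedral shadow. After a standard normalization I can assume it is written as the image $S^*=\pi(C)$ of a spectrahedral cone
\[
C\>=\>\bigl\{(a,b)\in\R^{n+m}\colon M(a)+N(b)\succeq0\bigr\},
\]
where $M(a)=\sum_ia_iM_i$ and $N(b)=\sum_jb_jN_j$ for some $M_i,N_j\in\Sym_d(\R)$, with $\pi\colon(a,b)\mapsto a$, and moreover that $C$ contains a point at which the defining matrix is positive definite (Slater). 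Proposition \ref{dualspectrcone} then computes $C^*$ explicitly, and combining the general formula $(\pi(C))^*=(\pi^*)^{-1}(C^*)$ with the bipolar identity $S^{**}=\ol{\cone(S)}$ yields
\[
\ol{\cone(S)}\>=\>\bigl\{y\in\R^n\colon\ex B\in\Sym_d^\plus(\R)\text{ with }y_i=\bil B{M_i}\ \all i\text{ and }\bil B{N_j}=0\ \all j\bigr\}.
\]

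With this representation in hand, I construct $\phi$ and $U$ by ``square-rooting'' $B$. Let $Y=(Y_{kl})_{1\le k,l\le d}$ be a matrix of indeterminates, viewed as coordinates on the affine space $\A^{d^2}$ of $d\times d$ matrices, and let $X\subset\A^{d^2}$ be the closed $\R$-subvariety cut out by the quadratic equations $\tr(Y^{\top}N_jY)=0$ for $j=1,\dots,m$. Define $\phi\colon X\to\A^n$ by the coordinate functions $\phi_i:=\tr(Y^{\top}M_iY)\in\R[X]$, and let $U\subset\R[X]$ be the $\R$-span of the $d^2$ coordinate functions $Y_{kl}$. The inclusion $S\subset\phi(X(\R))$ is immediate: given $y\in S\subset\ol{\cone(S)}$, pick $B\succeq0$ with $y_i=\bil B{M_i}$ and $\bil B{N_j}=0$, write $B=YY^{\top}$ by Cholesky, and observe that $\tr(Y^{\top}N_jY)=\bil B{N_j}=0$ gives $Y\in X(\R)$, while $\phi_i(Y)=\bil B{M_i}=y_i$.

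For the sum-of-squares property, fix a homogeneous linear form $f=\sum_ia_ix_i$ that is non-negative on $S$, so that $a\in S^*$; choose $b\in\R^m$ with $M(a)+N(b)\succeq0$ and factor $M(a)+N(b)=QQ^{\top}$ with $Q$ a real $d\times d$ matrix. Using the defining relations $\tr(Y^{\top}N_jY)=0$ of $X$, I then compute in $\R[X]$
\[
\phi^*(f)\>=\>\tr\bigl(Y^{\top}(M(a)+N(b))Y\bigr)\>=\>\tr\bigl((Q^{\top}Y)^{\top}(Q^{\top}Y)\bigr)\>=\>\sum_{k,l}(Q^{\top}Y)_{kl}^2,
\]
and each entry $(Q^{\top}Y)_{kl}=\sum_pQ_{pk}Y_{pl}$ is an $\R$-linear combination of the $Y_{pq}$, hence lies in $U$. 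Thus $\phi^*(f)\in\Sigma U^2$, as required.

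The main obstacle, in my view, is the preliminary normalization that produces the conic, Slater-type representation of $S^*$ from the bare hypothesis that $\ol{\cone(S)}$ is a spectrahedral shadow: passing to a representation with no constant term, and reducing to ensure that $C$ has a point of positive definiteness, both require some care, though neither is deep. Once this is secured, the remainder of the argument is essentially a transparent identity: one Cholesky decomposition of $B$ (producing the point of $X(\R)$ above $y$) and one of $M(a)+N(b)$ (producing the sum-of-squares certificate for $\phi^*(f)$).
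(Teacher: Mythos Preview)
Your argument is correct and rests on the same core mechanism as the paper's (Hanselka's) proof: semidefinite duality together with the elementary identity $\tr\bigl(Y^\top QQ^\top Y\bigr)=\sum_{k,l}(Q^\top Y)_{kl}^2$, so that one matrix factorization produces real points of $X$ over $S$ and the other produces the sum-of-squares certificate. The difference is organizational. The paper represents $\ol{\cone(S)}$ itself by a strictly feasible conic LMI, keeps the primal variables $(\xi,\eta)$ as coordinates on $X$, and adjoins a \emph{symmetric} square root $Z$ of $M(\xi)+N(\eta)$; Proposition~\ref{dualspectrcone} is then invoked on the certificate side to produce the witness $B=V^2$. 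You instead first pass to the polar, representing $S^*$ by the conic LMI (using closure of spectrahedral shadows under polarity), and apply Proposition~\ref{dualspectrcone} together with bipolarity to obtain the description of $\ol{\cone(S)}$; your $X$ then consists only of general $d\times d$ matrices $Y$ subject to the quadratic constraints $\tr(Y^\top N_jY)=0$, with $\phi$ given by trace forms rather than a projection. Your variant gives a slightly leaner variety $X$, while the paper's avoids the extra appeal to polarity and makes $\phi$ a coordinate projection. The normalization you flag as the main obstacle---passing to a conic, strictly feasible LMI---is precisely what the paper also performs, by restricting to the linear hull of the lifted spectrahedron.
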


Our original proof for Proposition \ref{kerneu} (see version~1 of
arxiv:1612.07048) was non-constructive and used a compactness
argument for the real spectrum. The following explicit construction
is much more elegant and transparent. It was suggested by Christoph
Hanselka, who kindly agreed that his argument may be included here.
Independently, the original approach may still have its merits, as
we plan to demonstrate in follow-up work.

\begin{proof}
Let $C=\cone(S)$, the convex cone generated by $S$ in $\R^n$.
We may assume that $\R^n$ is affinely spanned by $S$.
By assumption, $\ol C$ is the linear image of a
spectrahedron $T\subset\R^N$ under a linear map $\pi\colon\R^N\to
\R^n$, for some~$N$. We may assume that $T$ is a cone,
and we may replace $\R^N$ by the linear hull of $T$.
Then $T$ can be represented by a homogeneous linear matrix inequality
that is strictly feasible.
So we can assume that there are integers $d\ge1$ and $m\ge0$,
together with linear matrix pencils $M(x)=\sum_{i=1}^nx_iM_i$,
$N(y)=\sum_{j=1}^my_jN_j$ in $\Sym_d(\R)$,
such that
$$T\>=\>\bigl\{(\xi,\eta)\in\R^n\times\R^m\colon M(\xi)+N(\eta)
\succeq0\},$$
such that $\ol C=\pi(T)$ where $\pi(\xi,\eta)=\xi$, and such that
there exists $(\xi,\eta)\in\R^n\times\R^m$ with $M(\xi)+N(\eta)
\succ0$.

Consider the closed subvariety $X$ of $\A^n\times\A^m\times\Sym_d$
defined over $\R$ whose $\C$-points are the triples $(\xi,\eta,A)$,
where $A$ is a symmetric $d\times d$-matrix satisfying
$$A^2\>=\>\sum_{i=1}^n\xi_iM_i+\sum_{j=1}^m\eta_jN_j.$$
We shall denote the coordinate functions on $X$ by
$$\bigl(x_1,\dots,x_n;\>y_1,\dots,y_m;\>
(z_{\mu\nu})_{1\le\mu,\nu\le d}\bigr)\>=\>(x,y,Z)$$
with $z_{\mu\nu}=z_{\nu\mu}$ for $1\le\mu,\,\nu\le d$.
Let $\phi\colon X\to\A^m$ be the projection $\phi(\xi,\eta,A)=\xi$.
Then $\phi(X(\R))=\pi(T)=\ol C$, since a real symmetric matrix is
psd if and only if it is the square of some real symmetric matrix.
Let $U\subset\R[X]$ be the linear subspace spanned
by the coefficient functions $z_{\mu\nu}=z_{\nu\mu}$ ($1\le\mu,\nu\le
d$) of $Z$. We claim that the assertion of \ref{kerneu} holds with
these choices of $\phi$ and~$U$.

To see this, let $f=\sum_{i=1}^na_ix_i$ be a linear homogeneous
polynomial in $\R[x]=\R[x_1,\dots,x_n]$ with $f\ge0$ on $S$ (and
hence $f\ge0$ on~$\ol C$).
So the tuple $(a,0)=(a_1,\dots,a_n;\,0,\dots,0)\in\R^n\times\R^m$
lies in the dual cone $T^*$ of~$T$.
Since the linear matrix inequality is strictly feasible, there exists
$B\in\Sym_d(\R)$ with $B\succeq0$ such that $a_i=\bil B{M_i}$
($1\le i\le n$) and $0=\bil B{N_j}$ ($1\le j\le m$), by Proposition
\ref{dualspectrcone}. Let $V\in\Sym_d(\R)$ with $B=V^2$. Then, as an
element of $\R[X]$, $\phi^*(f)$ is equal to
$$\sum_{i=1}^n\bil B{M_i}x_i+\sum_{j=1}^n\bil B{N_j}y_j\>=\>
\bigl\langle B,\>M(x)+N(y)\bigr\rangle\>=\>\bil{V^2}{Z^2}\>=\>
\bil{ZV}{ZV}.$$
This means that
$$\phi^*(f)\>=\>\sum_{\mu,\,\nu=1}^d\bigl((ZV)_{\mu\nu}\bigr)^2$$
is a sum of squares in $\R[X]$ from the linear subspace $U\subset
\R[X]$.
\end{proof}

Combining Propositions \ref{homumker} and \ref{kerneu}, we therefore
get:

\begin{thm}\label{mainhom}%
Let $S\subset\R^n$ be a \sa\ set, and let $C=\cone(S)$ be the convex
cone in $\R^n$ generated by~$S$. The closure $\ol C$ is a
spectrahedral shadow
if and only if there exists a morphism
$\phi\colon X\to\A^n$ of affine $\R$-varieties, together with an
$\R$-linear subspace $U\subset\R[X]$ of finite dimension, such that
\begin{itemize}
\item[(1)]
$S\subset\phi(X(\R))$,
\item[(2)]
for every homogeneous linear polynomial $f\in\R[x_1,\dots,x_n]$ with
$f\ge0$ on $S$, the pull-back $\phi^*(f)\in\R[X]$ is a sum of squares
of elements from~$U$.
\end{itemize}
\end{thm}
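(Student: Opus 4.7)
The theorem is essentially the conjunction of the two immediately preceding results, so my plan is simply to verify that each implication reduces to a direct specialization of one of them.

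For the necessity direction ($\Rightarrow$), I would invoke Proposition \ref{kerneu} verbatim: under the assumption that $\ol{\cone(S)}$ is a spectrahedral shadow, it already produces a morphism $\phi\colon X\to\A^n$ and finite-dimensional linear subspace $U\subset\R[X]$ satisfying conditions (1) and (2). Nothing further is needed.

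For the sufficiency direction ($\Leftarrow$), I would appeal to Corollary \ref{homumker} with the following choice of data. Take the ambient variety to be $V=\A^n$, so $\R[V]=\R[x_1,\dots,x_n]$, and let $L\subset\R[V]$ be the $\R$-linear span of the coordinate functions $x_1,\dots,x_n$. Then $1\notin L$, and via the dual basis $\A_L(\R)=L^\du$ is identified with $\R^n$ in such a way that $\varphi_L$ becomes the identity map; in particular $\varphi_L(S)=S$, and $L\cap\scrP(S)$ is exactly the cone of homogeneous linear polynomials in $\R[x_1,\dots,x_n]$ that are non-negative on $S$. I then apply Corollary \ref{homumker} with $m=1$, the single morphism $\phi_1=\phi\colon X\to V$ given by hypothesis, and subspace $U_1=U$. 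Conditions (1) and (2) of the corollary translate into the hypotheses (1) and (2) of the theorem verbatim, so the corollary delivers that $\ol{\cone(\varphi_L(S))}=\ol{\cone(S)}=\ol C$ is a spectrahedral shadow.

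I do not anticipate any substantive obstacle: the proof is a bookkeeping exercise that matches the output of Proposition \ref{kerneu} with the input of Corollary \ref{homumker}. The only point requiring a moment of care is the identification $\A_L(\R)=L^\du\cong\R^n$, so that the ``homogeneous linear polynomials $f\in\R[x_1,\dots,x_n]$ with $f\ge 0$ on $S$'' appearing in the theorem correspond precisely to the elements of $L\cap\scrP(S)$ to which the corollary's condition~(2) applies.
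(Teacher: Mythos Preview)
Your proposal is correct and follows exactly the paper's own proof, which simply cites Proposition~\ref{kerneu} for necessity and Corollary~\ref{homumker} for sufficiency. You have merely spelled out the (correct) identification $V=\A^n$, $L=\operatorname{span}(x_1,\dots,x_n)$ that makes Corollary~\ref{homumker} apply; the paper leaves this implicit.
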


\begin{proof}
The second condition is necessary for $\ol C$ to be a spectrahedral
shadow
by Proposition \ref{kerneu}, and it is sufficient by \ref{homumker}.
\end{proof}

Instead of working with convex cones we may also dehomogenize and
derive a non-homogeneous version from Theorem \ref{mainhom}.
Alternatively, we could as well have worked in an inhomogeneous
setting from the beginning:

\begin{thm}\label{maininhom}%
Let $S\subset\R^n$ be a \sa\ set, and let $K=\conv(S)$ be its convex
hull in $\R^n$. The closure $\ol K$ is a spectrahedral shadow
if and only if there exists a morphism $\phi\colon X\to\A^n$ of
affine $\R$-varieties and an $\R$-linear subspace $U\subset\R[X]$ of
finite dimension such that
\begin{itemize}
\item[(1)]
$S\subset\phi(X(\R))$,
\item[(2)]
for every (inhomogeneous) linear polynomial $f\in\R[x]$ with $f\ge0$
on $S$, the element $\phi^*(f)$ of $\R[X]$ is a sum of squares of
elements from~$U$.
\end{itemize}
\end{thm}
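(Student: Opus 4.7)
The ``if'' direction is an immediate specialization of Proposition~\ref{umker}: take $L=\R x_1+\cdots+\R x_n$, so that $L_1=\R1+L$ is exactly the space of inhomogeneous linear polynomials in $\R[x]$ and $\varphi_L\colon\A^n\to\A_L$ is the identity, giving $\ol{\conv(\varphi_L(S))}=\ol K$; condition~(2) of~\ref{umker} then matches condition~(2) of the theorem.

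For the ``only if'' direction, the plan is to reduce to the homogeneous Theorem~\ref{mainhom} by a standard homogenization. Adjoin a new coordinate $x_0$ and set $\tilde S:=\{1\}\times S\subset\R^{n+1}$. The preliminary and nontrivial step is to verify that $\ol{\cone(\tilde S)}$ is itself a spectrahedral shadow. Starting from a semidefinite representation $\ol K=\bigl\{\xi\colon\ex\eta\ A+\sum_i\xi_iB_i+\sum_j\eta_jC_j\succeq0\bigr\}$, the substitution $\eta\rightsquigarrow t\eta$ (a bijection for $t>0$) identifies
\[
C\>:=\>\bigl\{(t,\xi)\in\R^{n+1}\colon t>0,\ \xi/t\in\ol K\bigr\}\>=\>\bigl\{(t,\xi)\colon t>0,\ \ex\eta\ tA+\textstyle\sum_i\xi_iB_i+\sum_j\eta_jC_j\succeq0\bigr\}.
\]
The right-hand side is the intersection of the spectrahedral shadow $\{t>0\}\times\R^n$ (the half-line $(0,\infty)$ being the projection of $\{(t,s)\colon\bigl(\begin{smallmatrix}t&1\\1&s\end{smallmatrix}\bigr)\succeq0\}$) with the linear projection of the spectrahedral cone $\{(t,\xi,\eta)\colon tA+\sum_i\xi_iB_i+\sum_j\eta_jC_j\succeq0\}$; hence $C$, and therefore its closure, are spectrahedral shadows. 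A routine density argument using $K=\conv(S)$ dense in $\ol K$ then gives $\ol{\cone(\tilde S)}=\ol{\cone(\{1\}\times\ol K)}=\ol C$.

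Now apply Theorem~\ref{mainhom} to $\tilde S\subset\R^{n+1}$: it produces a morphism $\tilde\phi\colon\tilde X\to\A^{n+1}$ and a finite-dimensional subspace $\tilde U\subset\R[\tilde X]$ with $\tilde S\subset\tilde\phi(\tilde X(\R))$ such that every homogeneous linear $g\in\R[x_0,x_1,\dots,x_n]$ non-negative on $\tilde S$ has $\tilde\phi^*(g)$ a sum of squares of elements of $\tilde U$. Dehomogenize by taking $X\subset\tilde X$ to be the closed subvariety cut out by $\tilde\phi^*(x_0)-1$, letting $\phi\colon X\to\A^n$ be the restriction of $\tilde\phi$ composed with the identification $\{x_0=1\}\cong\A^n$, and letting $U\subset\R[X]=\R[\tilde X]\big/\bigl(\tilde\phi^*(x_0)-1\bigr)$ be the image of $\tilde U$.

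Condition~(1), $S\subset\phi(X(\R))$, follows directly from $\tilde S=\{1\}\times S\subset\tilde\phi(\tilde X(\R))$. For condition~(2), given an inhomogeneous linear polynomial $f=a_0+\sum_ia_ix_i\in\R[x]$ with $f\ge0$ on $S$, its homogenization $\tilde f:=a_0x_0+\sum_ia_ix_i$ is a homogeneous linear form with $\tilde f\ge0$ on $\tilde S$, so Theorem~\ref{mainhom} writes $\tilde\phi^*(\tilde f)$ as a sum of squares of elements of $\tilde U$ in $\R[\tilde X]$; reducing modulo $\tilde\phi^*(x_0)-1$ (in which $\tilde\phi^*(x_0)$ becomes $1$) expresses $\phi^*(f)$ as a sum of squares of elements of~$U$, as required. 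The only place where genuine care is needed is the homogenization identification of $\ol{\cone(\tilde S)}$ as a spectrahedral shadow; everything after that is formal bookkeeping.
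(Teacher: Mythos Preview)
Your argument is correct and follows essentially the same route as the paper: both directions invoke Proposition~\ref{umker} and Theorem~\ref{mainhom} respectively, via homogenization through $\{1\}\times S\subset\R^{n+1}$. You supply more detail than the paper in two places---an explicit semidefinite description of $\ol{\cone(\tilde S)}$ (the paper merely asserts ``it is easy to see that $\cone(\ol K_1)$ is a spectrahedral shadow''), and an explicit dehomogenization by passing to the closed subvariety $\{\tilde\phi^*(x_0)=1\}\subset\tilde X$---but the underlying strategy is identical.
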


\begin{proof}
If there exist $\phi$ and $U$ satisfying (1) and (2), $\ol K$ has a
semidefinite representation
by Proposition \ref{umker}. Conversely, assume that $\ol K$ has a
semidefinite representation,
and let $K_1=\{1\}\times K\subset\R\times\R^n=\R^{n+1}$. Since
$\ol K_1$ is a spectrahedral shadow, it is easy to see that
$\cone(\ol K_1)$ is a spectrahedral shadow
in $\R^{n+1}$.
Hence the closure of $\cone(\ol K_1)$ is a spectrahedral shadow
as well. Clearly, this last cone coincides with $\ol C_1$, where
$C_1$ is the convex cone in $\R^{n+1}$ generated by $S_1=\{1\}\times
S$.
Now we can apply the ``only if'' part of Theorem \ref{mainhom} to
$S_1$ and $C_1$ and deduce the converse in Theorem \ref{maininhom}.
\end{proof}

\begin{rem}\label{sharpenfurther}%
In \ref{maininhom} we may sharpen conditions (1) and (2) further.
Assume we are given a morphism $\phi\colon X\to\A^n$ and a linear
subspace $U\subset\R[X]$ as in \ref{maininhom}. From (1) we deduce
that there exists a \sa\ set $M\subset X(\R)$ with $\phi(M)=S$ and
with $\dim(M)=\dim(S)$ (use a semi-algebraic section $S\to X(\R)$
of $\phi$ over $S$). Let $X'$ be the Zariski closure of $M$ in $X$,
and let $U'\subset\R[X']$ be the image of $U$ under $\R[X]\to\R[X']$.
Then (1) and (2) hold as well for the restriction $\phi'\colon
X'\to\A^n$ of $\phi$ and for $U'$. Therefore, we can achieve in
addition that $\dim(X)=\dim(S)$.
On the other hand, condition (1) can be replaced by either
$\ol K=\phi(X(\R))$ (to make it seemingly stronger), or by
$K\subset\ol{\conv(\phi(X(\R)))}$ (to make it seemingly weaker). Note
that the inclusion $\phi(X(\R))\subset\ol K$ holds for any $\phi$
satisfying~(2). Indeed, given any $\xi\in\R^n$, $\xi\notin\ol K$,
there exists $f\in\R[x]$ linear with $f|_S\ge0$ and $f(\xi)<0$, so
(2) implies $\xi\notin\phi(X(\R))$.
\end{rem}

\begin{lab}\label{xtendualcone}%
In the next section we need to work not only over the field $\R$ of
real numbers, but also over real closed extension fields $R\supset
\R$. Given an affine $\R$-variety $V$ and a \sa\ set $M\subset
V(\R)$, the base field extension of $M$ to $R$ is denoted $M_R$
(see \cite[Section 5.1]{bcr}). If $M$ is described by a finite
boolean
combination of inequalities $f_i>0$ (with $f_i\in\R[V]$), the set
$M_R\subset V(R)$ is described by the same system of inequalities.
\end{lab}

\begin{rem}\label{uniform}%
Let $\phi\colon X\to V$ be a morphism of affine $\R$-varieties, let
$L\subset\R[V]$ and $U\subset\R[X]$ be finite-dimensional linear
subspaces, and let $S\subset V(\R)$ be a \sa\ set. Assume that the
following condition holds:
\begin{quote}
$(*)$
For every $f\in L$ with $f\ge0$ on $S$, the pull-back
$\phi^*(f)\in\R[X]$ is a sum of squares of elements of $U$.
\end{quote}
Then the extension of $(*)$ to any real closed field extension $R$ of
$\R$ holds as well. More precisely, any $f\in L_R=L\otimes R\subset
R[V]$ with $f\ge0$ on $S_R\subset V(R)$ becomes a sum of squares of
elements of $U_R=U\otimes R$ in $R[X]$, by the Tarski principle.

In particular, any $f\in L\otimes R$ with $f\ge0$ on $S_R$ becomes a
sum of squares in $R[X]$. We remark that this last conclusion would
fail in general if in $(*)$ we had only required that $\phi^*(f)$ is
a sum of squares in $\R[X]$. For instance, taking $\phi$ to be the
identity of $X=V=\A^2$ and $S$ the unit disk would give
counter-examples: Every $f\in\R[x_1,x_2]$ with $f\ge0$ on $S$ can be
written $f=p+(1-x_1^2-x_2^2)q$ with sums of squares $p,\,q\in
\R[x_1,x_2]$, but the analogous statement fails over any proper real
closed extension $R$ of~$\R$ (see \cite{sch:surf} and
\cite{sch:stable}).
Rather, one needs that uniform sums of squares expressions exist as
in $(*)$, to guarantee that the condition is stable under real closed
field extension.
\end{rem}

The following version is essentially identical with the ``only if''
part of Theorem \ref{maininhom}, but will be more convenient in the
next section.
Let $V$ be an affine $\R$-variety, and let $L\subset\R[V]$ be a
finite-dimensional linear subspace. Let $\varphi_L\colon V\to\A_L
\cong\A^n$ ($n=\dim(L)$) be the associated morphism, see
\ref{linsystmorph}.

\begin{cor}\label{handlicher}%
With $V$ and $L$ as above, let $S\subset V(\R)$ be a \sa\ set. Assume
that $\ol{\conv(\varphi_L(S))}$, the closed convex hull in $\A_L(\R)=
L^\du$, is a spectrahedral shadow.
Then there exists a
morphism $\phi\colon X\to V$ of affine $\R$-varieties, together with
a finite-dimensional linear subspace $U\subset\R[X]$, such that
$S\subset\phi(X(\R))$ and the following holds:
\begin{quote}
For every real closed field $R\supset\R$ and every $f\in L_R+R1
\subset R[V]$ with $f\ge0$ on $S_R$, the pull-back $\phi_R^*(f)$
under $\phi_R\colon X_R\to V_R$ is a sum of squares of elements from
$U\otimes R$ in $\R[X]\otimes R=R[X]$.
\end{quote}
(The converse is true as well, covered by Proposition \ref{umker}.)
\end{cor}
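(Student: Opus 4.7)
The plan is to reduce to the ``only if'' direction of Theorem~\ref{maininhom}, applied not to $S$ itself but to its image $\varphi_L(S)\subset L^\du$, and then to upgrade the resulting sum-of-squares condition from $\R$ to arbitrary real closed extensions by the Tarski principle as recorded in Remark~\ref{uniform}. The hypothesis that $\ol{\conv(\varphi_L(S))}$ is a spectrahedral shadow, combined with the fact that $\varphi_L(S)$ is a \sa\ subset of $L^\du\cong\R^m$ (Lemma~\ref{closedconvhullsorit}(a)), is precisely what is needed to invoke Theorem~\ref{maininhom} in this transported form.

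Concretely, that theorem produces a morphism $\psi\colon Y\to\A_L$ of affine $\R$-varieties and a finite-dimensional $\R$-subspace $U'\subset\R[Y]$ with $\varphi_L(S)\subset\psi(Y(\R))$, such that every linear polynomial $\wt f$ on $\A_L$ that is non-negative on $\varphi_L(S)$ has $\psi^*(\wt f)$ expressible as a sum of squares of elements of $U'$. The space of linear polynomials on $\A_L$ is $\R+L\subset\sfS^\bullet L=\R[\A_L]$, and $\varphi_L^*$ carries this subspace identically onto $L_1=\R+L\subset\R[V]$. To transport $\psi$ back to $V$, I form the reduced fibre product $X:=(V\times_{\A_L}Y)_{\mathrm{red}}$ together with its projections $\phi\colon X\to V$ and $\psi'\colon X\to Y$, which satisfy $\varphi_L\comp\phi=\psi\comp\psi'$. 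Any $s\in S$ has $\varphi_L(s)\in\psi(Y(\R))$, so picking a preimage $y\in Y(\R)$ yields a real point $(s,y)$ of $X$ with $\phi(s,y)=s$; hence $S\subset\phi(X(\R))$. Setting $U:=(\psi')^*(U')\subset\R[X]$, for $f\in L_1$ with $f\ge 0$ on $S$ the corresponding $\wt f\in\R+L\subset\R[\A_L]$ satisfies $\varphi_L^*(\wt f)=f$ and $\wt f\ge 0$ on $\varphi_L(S)$. Writing $\psi^*(\wt f)=\sum_i g_i^2$ with $g_i\in U'$ and pulling back by $\psi'$ yields $\phi^*(f)=\sum_i\bigl((\psi')^*(g_i)\bigr)^2$, a sum of squares of elements of $U$ in $\R[X]$.

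This settles the case $R=\R$. For a general real closed extension $R\supset\R$, I invoke the Tarski principle exactly as in Remark~\ref{uniform}: once $\phi$, $U$, $L$ and a description of $S$ have been fixed over $\R$, the assertion ``every element of $L_1$ non-negative on $S$ is a sum of at most $N$ squares of elements of $U$ under $\phi^*$'' is, for any fixed bound $N\ge\dim(UU)$, a first-order sentence in the language of ordered fields with parameters in the coefficient data. Hence it transfers verbatim from $\R$ to $R$ after base change, giving the required decomposition in $R[X]$ for every $f\in L_R+R\cdot 1$ from squares of elements of $U\otimes R$. The only mildly technical point is the fibre-product step, where one must verify that passing to the reduced quotient does not disturb the sum-of-squares identity; this is automatic, since the equality $\phi^*\comp\varphi_L^*=(\psi')^*\comp\psi^*$ already holds in the possibly non-reduced fibre-product ring $\R[V]\otimes_{\R[\A_L]}\R[Y]$ and so descends to its reduced quotient $\R[X]$.
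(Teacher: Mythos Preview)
Your proof is correct and follows essentially the same route as the paper: apply Theorem~\ref{maininhom} to $\varphi_L(S)\subset\A_L(\R)$ to obtain $\psi\colon Y\to\A_L$ and $W\subset\R[Y]$, form the (reduced) fibre product $X=V\times_{\A_L}Y$ with $U$ the image of $W$ in $\R[X]$, and then invoke the Tarski principle (Remark~\ref{uniform}) to pass from $\R$ to arbitrary real closed $R$. Your write-up is in fact more explicit than the paper's on the verification of $S\subset\phi(X(\R))$ and on the identification of $L_1$ with the linear polynomials on $\A_L$; the only cosmetic point is that the natural bound on the number of squares needed is $\dim(U)$ rather than $\dim(UU)$, but any finite bound suffices for the transfer argument.
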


\begin{proof}
By \ref{maininhom} there exists a morphism $\psi\colon Y\to\A_L$ of
affine $\R$-varieties, together with a finite-dimensional subspace
$W\subset\R[Y]$, such that, for every $f\in\R1+L$ with $f\ge0$ on
$S$, the
pull-back $\psi^*(f)\in\R[Y]$ is a sum of squares of elements from
$W$. Let $X$ be the fibered product of $V$ and $Y$ over $\A_L$, let
$\phi\colon X\to V$ be the canonical morphism, and let $U\subset
\R[X]$ be the pull-back of $W$ under $X\to Y$. Then the condition in
\ref{handlicher} is satisfied for $R=\R$. By Tarski-Seidenberg, the
condition holds over any real closed extension $R$ as well (see also
Remark \ref{uniform}).
\end{proof}

It may not be obvious immediately, but the necessary condition for
semidefinite representability found in Theorems \ref{mainhom} resp.\
\ref{maininhom} is quite restrictive.
In the next section we'll elaborate on this in more
detail.


\section{Constructing examples}\label{counterex}%

We use properties of smooth morphisms of algebraic varieties,
together with a weak version of generic smoothness, to construct
examples of convex sets that have no semidefinite representation.

\begin{lab}
Let $k$ be a field. Recall that a morphism $\phi\colon X\to Y$ of
algebraic $k$-varieties is smooth at $x\in X$ if there exist affine
open sets $U=\Spec(A)\subset X$ and $V=\Spec(B)\subset Y$ with $x\in
U$ and $\phi(U)\subset V$ such that $A$ is (via $\phi$)
$B$-isomorphic to $B[x_1,\dots,x_n]/(f_1,\dots,f_m)$, where $m\le n$
and $\det(\partial f_i/\partial x_j)_{1\le i,j\le m}$ is a unit in
$\scrO_{X,x}$. It is equivalent that $\phi$ is flat at $x$ and that
the fibre $\phi^{-1}(\phi(x))$ is geometrically regular at $x$ over
the residue field of $\phi(x)$, see \cite[17.5.1]{ega4}.
The smooth locus of $\phi$, i.e.\ the set of points $x\in X$ at
which $\phi$ is smooth, is Zariski open in $X$.

We will use the following weak version of generic smoothness (compare
\cite[Lemma III.10.5]{ha}):
\end{lab}

\begin{prop}\label{gensmoth}%
Let $\phi\colon X\to Y$ be a dominant morphism between irreducible
$k$-varieties where $\ch(k)=0$. Then there exists a non-empty Zariski
open subset $U\subset X$ such that $\phi|_U\colon U\to Y$ is smooth.
\end{prop}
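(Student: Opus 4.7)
The plan is to exhibit a single point of $X$ at which $\phi$ is smooth. Since the smooth locus of any morphism is Zariski open in the source (as recalled in the paragraph preceding the statement), finding one smooth point automatically produces the required non-empty open $U$. Recall further that $\phi$ is smooth at $x \in X$ iff $\phi$ is flat at $x$ and the scheme-theoretic fibre $\phi^{-1}(\phi(x))$ is geometrically regular over $\kappa(\phi(x))$ at~$x$. So I will produce a point $x$ meeting both criteria.

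The first criterion is handled by \emph{generic flatness}: since $Y$ is integral and $X\to Y$ is a morphism of finite type, there is a non-empty open $U_0\subset X$ on which $\phi$ is flat. Replacing $X$ by $U_0$, I may assume $\phi$ is flat everywhere. For the second criterion I pass to the generic fibre. Let $\eta \in Y$ be the generic point, $K=k(Y)=\kappa(\eta)$, and let $X_\eta=X\times_Y\Spec(K)$. Because $\phi$ is dominant and both varieties are irreducible, $X_\eta$ is an irreducible $K$-variety whose function field is $k(X)$. This is the place where $\ch(k)=0$ enters: $K$ has characteristic zero, hence is perfect, so the finitely generated extension $k(X)/K$ is separably generated. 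Consequently $X_\eta$ is a geometrically reduced $K$-variety, and its regular locus $W\subset X_\eta$ is a non-empty open subset that is actually geometrically regular over~$K$.

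To conclude, pick any point $w\in W$ and let $x\in X$ be its image under the canonical morphism $X_\eta \to X$. By construction $\phi(x)=\eta$, the scheme-theoretic fibre of $\phi$ over $\eta$ equals $X_\eta$, and this fibre is geometrically regular over $\kappa(\eta)=K$ at~$x$. Combined with flatness at $x$ (established above), this shows $\phi$ is smooth at~$x$. Openness of the smooth locus now produces the required non-empty Zariski open $U\subset X$ on which $\phi$ is smooth.

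The main obstacle is really the invocation of generic separability, i.e.\ the step where characteristic zero is used; everything else (generic flatness, openness of smoothness, the cohomological/flatness + geometrically regular fibre characterisation of smoothness) is entirely formal and can be quoted directly from \cite{ega4}. In positive characteristic the statement would fail (e.g.\ Frobenius), which is precisely why the hypothesis $\ch(k)=0$ is essential in this lemma.
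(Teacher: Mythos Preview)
The paper does not actually supply a proof of this proposition; it simply states the result with a pointer to \cite[Lemma III.10.5]{ha}. Your argument is correct and self-contained. It differs from Hartshorne's proof in that Hartshorne argues via the sheaf of relative differentials (showing that $\Omega_{X/Y}$ is locally free of the expected rank on a non-empty open, using separability of $k(X)/k(Y)$), whereas you instead use the ``flat plus geometrically regular fibre'' criterion that the paper quotes from \cite[17.5.1]{ega4} immediately before the proposition. Both routes rest on the same essential input---separability of the function field extension in characteristic zero---and your choice is arguably the more natural one here, since it invokes precisely the characterisation of smoothness the paper has just set up. One small cosmetic point: your formulation of generic flatness produces an open set in $X$, while the standard statement gives an open $V_0\subset Y$; of course $U_0=\phi^{-1}(V_0)$ is non-empty by dominance, so this is harmless.
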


The following result is contained in \cite[17.5.3]{ega4} as a
particular case:

\begin{prop}\label{smothpowser}%
Let $\phi\colon X\to Y$ be a morphism of algebraic $k$-varieties.
Let $\xi\in X(k)$, and write $A=\scrO_{X,\xi}$,
$B=\scrO_{Y,\phi(\xi)}$. Then $\phi$ is smooth at $\xi$ if and
only if $\wh A$ is isomorphic over $\wh B$ to a power series algebra
$\wh B[[t_1,\dots,t_m]]$.
\end{prop}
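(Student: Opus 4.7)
The plan is to prove both implications by combining the flatness-plus-geometrically-regular-fiber criterion of smoothness from \cite[17.5.1]{ega4} with the formal implicit function theorem over complete Noetherian local rings. Throughout, write $\m=\m_{X,\xi}$, $\n=\m_{Y,\phi(\xi)}$, and denote completion with respect to these maximal ideals by a hat.

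For the direction from smoothness to power series structure, I would start from the definition of smoothness recalled above: after shrinking, $A\cong B[x_1,\dots,x_n]/(f_1,\dots,f_m)$ as $B$-algebras with $\det(\partial f_i/\partial x_j)_{1\le i,j\le m}$ a unit in $\scrO_{X,\xi}$. After translating we may assume $\xi$ corresponds to $x_1=\cdots=x_n=0$ lying over $\n\subset B$. Completing at the maximal ideal $(\n,x_1,\dots,x_n)$ of the polynomial ring yields $\wh A=\wh B[[x_1,\dots,x_n]]/(f_1,\dots,f_m)$. The Jacobian invertibility at the closed point is precisely the hypothesis of the formal implicit function theorem over $\wh B$, so Newton iteration in $\wh B[[x_{m+1},\dots,x_n]]$ produces unique series $g_1,\dots,g_m$ in the maximal ideal with $f_i(g_1,\dots,g_m,x_{m+1},\dots,x_n)=0$. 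The substitution $x_i\mapsto g_i$ for $i\le m$ and $x_j\mapsto t_{j-m}$ for $j>m$ then gives the desired $\wh B$-isomorphism $\wh A\isoto\wh B[[t_1,\dots,t_{n-m}]]$.

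For the converse, assume an isomorphism $\wh A\cong\wh B[[t_1,\dots,t_m]]$ of $\wh B$-algebras; I would verify the two ingredients of smoothness at $\xi$ separately. Flatness: $\wh A$ is evidently $\wh B$-flat, hence $B$-flat via the faithfully flat map $B_\n\to\wh B$; faithful flatness of $A_\m\to\wh A$ then transfers flatness back by the usual $\mathrm{Tor}$-vanishing argument, giving flatness of $A_\m$ over $B_\n$. Geometrically regular fiber: the local ring of the fiber $\phi^{-1}(\phi(\xi))$ at $\xi$ is $(A/\n A)_\m$, whose completion equals $\wh A/\n\wh A\cong k[[t_1,\dots,t_m]]$. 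Since regularity descends from the completion and $\ch(k)=0$ makes regularity automatically geometric, the fiber is geometrically regular at $\xi$. Invoking \cite[17.5.1]{ega4} finishes the implication.

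The main obstacle is the formal implicit function theorem used in the forward direction. The argument is standard (Newton iteration converging in the $\m$-adic topology of a complete Noetherian local ring), but it requires care in tracking that the successive approximations stay in the appropriate maximal ideal and converge to a genuine element of $\wh B[[x_{m+1},\dots,x_n]]$. Once this is granted, the rest amounts to routine completion and descent manipulations, matching the particular case of \cite[17.5.3]{ega4} that the statement quotes.
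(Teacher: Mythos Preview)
The paper does not give a proof of this proposition at all; it simply records it as a particular case of \cite[17.5.3]{ega4}. Your sketch is correct and follows the standard route behind that reference (Jacobian presentation plus the formal implicit function theorem for the forward direction, descent of flatness and regularity from completions for the converse), so you have supplied strictly more than the paper does.
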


(Here, of course, hat denotes completion of a local ring.) From
\ref{smothpowser} we deduce the following observation:

\begin{lem}\label{sosglatt}%
Let $\phi\colon X\to Y$ be a morphism of algebraic $k$-varieties, and
assume that $\phi$ is smooth at $\xi\in X(k)$. If $f\in
\scrO_{Y,\phi(\xi)}$ is such that $\phi^*(f)$ is a sum of squares in
$\wh\scrO_{X,\xi}$, then $f$ is a sum of squares in
$\wh\scrO_{Y,\phi(\xi)}$.
\end{lem}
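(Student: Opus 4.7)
The plan is to reduce the statement to an essentially trivial fact about formal power series rings, using Proposition \ref{smothpowser} as the single nontrivial input. Write $A=\scrO_{X,\xi}$ and $B=\scrO_{Y,\phi(\xi)}$. Since $\phi$ is smooth at $\xi$, Proposition \ref{smothpowser} provides an isomorphism of $\wh B$-algebras $\wh A\isoto \wh B[[t_1,\dots,t_m]]$. Under this identification, the element $\phi^*(f)\in\wh A$ corresponds to the image of $f\in\wh B$ under the canonical inclusion $\wh B\hookrightarrow\wh B[[t_1,\dots,t_m]]$, because the isomorphism is compatible with the $\wh B$-algebra structure supplied by $\phi^*$.

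The key observation is then that this inclusion admits an obvious $\wh B$-algebra retraction, namely the ``evaluation at zero'' map
$$\pi\colon \wh B[[t_1,\dots,t_m]]\to\wh B,\qquad \pi(t_i)=0 \quad (i=1,\dots,m),$$
which is a ring homomorphism fixing $\wh B$ pointwise. If $\phi^*(f)=\sum_{i=1}^r h_i^2$ in $\wh A$, applying $\pi$ to both sides gives
$$f \>=\> \pi\bigl(\phi^*(f)\bigr) \>=\> \sum_{i=1}^r \pi(h_i)^2,$$
which exhibits $f$ as a sum of squares in $\wh B=\wh\scrO_{Y,\phi(\xi)}$.

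There is no real obstacle here once Proposition \ref{smothpowser} is cited: the entire content of the lemma is that smoothness allows a power series presentation, and power series rings retract onto their coefficient rings by setting the variables to zero. The only point that requires a moment's thought is checking that $\phi^*(f)$ really does sit inside $\wh A$ as the element $f\in\wh B$ under the chosen identification, which is built into the statement that the isomorphism of Proposition \ref{smothpowser} is one of $\wh B$-algebras.
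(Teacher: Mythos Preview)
Your proof is correct and follows exactly the same approach as the paper. The paper's proof is a one-line assertion that ``if an element of a ring $B$ becomes a sum of squares in $B[[t_1,\dots,t_m]]$, it was already a sum of squares in $B$''; you have simply made explicit the reason behind this (the evaluation-at-zero retraction), which is precisely the intended argument.
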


\begin{proof}
Indeed, if an element of a ring $B$ becomes a sum of squares in
$B[[t_1,\dots,t_m]]$, it was already a sum of squares in $B$.
\end{proof}

\begin{lab}\label{standasspts}%
We shall present two constructions. Each will give us concrete
examples of convex \sa\ sets without semidefinite representation. For
both, the reasoning will be based on the following technical lemma.
We will repeatedly assume that data is given as follows:

$(*)$
$V$ is an affine $\R$-variety, $L\subset\R[V]$ is a
finite-dimensional linear subspace, $\varphi_L\colon V\to\A_L\cong
\A^m$ ($m=\dim(L)$) is the associated morphism (see
\ref{linsystmorph}), and $S\subset V(\R)$ is a \sa\ set. Moreover,
$V'$ is an irreducible component of $V$ and $S'\subset S\cap
V'(\R)$ is a \sa\ set, Zariski-dense in $V'$.

(Note that some of the technicalities in $(*)$ and in \ref{lemconst}
arise since we want to cover sets $S$ as well whose Zariski closure
has several irreducible components. Otherwise we could have assumed
$V'=V$ and $S'=S$.)
\end{lab}

\begin{lem}\label{lemconst}%
Assume that $(*)$ as in \ref{standasspts} is given.
If $\ol{\conv(\varphi_L(S))}$ is a spectrahedral shadow
in $\A_L(\R)$, there
exists a morphism $\psi\colon W\to V'$ of affine $\R$-varieties,
together with $\xi\in W(\R)$, such that the following hold:
\begin{itemize}
\item[(1)]
$W(\R)$ is Zariski dense in $W$,
\item[(2)]
$\psi(\xi)\in S'$,
\item[(3)]
$\psi$ is smooth at $\xi$,
\item[(4)]
for every real closed field $R\supset\R$ and every $f\in L_R+R1
\subset R[V]$ with $f\ge0$ on $S_R$, the pull-back $\psi_R^*(f)\in
R[W]$ is a sum of squares in $R[W]$.
\end{itemize}
\end{lem}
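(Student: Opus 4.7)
The starting point is Corollary~\ref{handlicher}, applied to the hypothesis that $\ol{\conv(\varphi_L(S))}$ is a spectrahedral shadow. It produces a morphism $\phi\colon X\to V$ of affine $\R$-varieties and a finite-dimensional linear subspace $U\subset\R[X]$ with $S\subset\phi(X(\R))$ and with the property that, for every real closed $R\supset\R$, every $f\in L_R+R1$ satisfying $f\ge0$ on $S_R$ pulls back to a sum of squares of elements of $U\otimes R$ in $R[X]$. The task is then to cut $X$ down to an irreducible $\R$-subvariety $W$ that dominates $V'$, retains many real points, and on which $\phi$ is generically smooth.

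I would form the semi-algebraic set $T:=\phi^{-1}(S')\cap X(\R)$, which is non-empty because $S'\subset S\subset\phi(X(\R))$ and which satisfies $\phi(T)=S'$. Let $Y\subset X$ be the Zariski closure of $T$, and decompose $Y=Y_1\cup\cdots\cup Y_k$ into irreducible $\R$-components. Setting $T_j:=T\cap Y_j$, minimality of the Zariski closure forces $T_j$ to be Zariski-dense in $Y_j$. Now $S'=\bigcup_j\phi(T_j)$ is Zariski-dense in the irreducible variety $V'$, which cannot be written as a finite union of proper Zariski-closed subsets; hence some index $j_0$ satisfies $\ol{\phi(T_{j_0})}^{\mathrm{Zar}}=V'$. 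Put $W:=Y_{j_0}$ and $\psi:=\phi|_W$. Zariski continuity of $\phi$ together with density of $T_{j_0}$ in $W$ give $\ol{\phi(W)}^{\mathrm{Zar}}=\ol{\phi(T_{j_0})}^{\mathrm{Zar}}=V'$, which in particular forces the set-theoretic inclusion $\phi(W)\subset V'$ and makes $\psi\colon W\to V'$ a dominant morphism between irreducible $\R$-varieties.

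Property~(1) is then immediate: $T_{j_0}\subset W(\R)$ is already Zariski-dense in $W$. For~(3), I would apply generic smoothness (Proposition~\ref{gensmoth}) to $\psi$, obtaining a non-empty Zariski-open $U_0\subset W$ on which $\psi$ is smooth. Since $T_{j_0}$ is Zariski-dense in $W$, it cannot lie in the proper closed subvariety $W\setminus U_0$, and I pick $\xi\in T_{j_0}\cap U_0$; then $\xi\in W(\R)$, $\psi(\xi)\in\phi(T_{j_0})\subset S'$ verifies~(2), and $\psi$ is smooth at $\xi$, giving~(3). Finally, for~(4), any sum-of-squares representation of $\phi_R^*(f)$ in $R[X]$ supplied by Corollary~\ref{handlicher} descends via the surjection $R[X]\twoheadrightarrow R[W]$ induced by the closed embedding $W\hookrightarrow X$ to a sum-of-squares representation of $\psi_R^*(f)$ in $R[W]$.

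The one delicate step is the extraction of $W$: we must simultaneously arrange that $W$ dominates $V'$, that $W(\R)$ is Zariski-dense in $W$, and that real preimages of $S'$ inside $W$ are abundant enough to meet the generic smooth locus. All three are taken care of by the single construction $W=Y_{j_0}$ as an irreducible component of the Zariski closure of $\phi^{-1}(S')\cap X(\R)$; the rest is routine functoriality of sums of squares and a standard appeal to generic smoothness.
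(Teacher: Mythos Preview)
Your argument is correct and follows the same overall arc as the paper's proof: invoke Corollary~\ref{handlicher}, pass to an irreducible closed $\R$-subvariety of $X$ that dominates $V'$ and has Zariski-dense real points, apply generic smoothness (Proposition~\ref{gensmoth}) to locate a smooth real point over $S'$, and push the sums-of-squares representations down along the closed immersion into $X$.

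The one genuine difference is in how the irreducible piece is extracted. The paper invokes Remark~\ref{sharpenfurther}: it takes a semi-algebraic section $S'\to X(\R)$ of $\phi$ over $S'$, lets $X'$ be (an irreducible component of) the Zariski closure of its image, and thereby arranges $\dim(X')=\dim(V')$. This equality of dimensions is then used to argue that the non-smooth locus $Z=X'\setminus W$ has $\dim\phi'(Z(\R))<\dim V'$, so removing it cannot destroy Zariski density of the image in $V'$. You bypass both the section and the dimension count: by taking $W$ to be an irreducible component $Y_{j_0}$ of the Zariski closure of all of $\phi^{-1}(S')\cap X(\R)$, you secure a real semi-algebraic subset $T_{j_0}\subset W(\R)$ that is Zariski dense in $W$ \emph{by construction}, so it automatically meets the nonempty open smooth locus $U_0$. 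This is a little cleaner, at the cost of giving up any control on $\dim W$ (which the lemma does not require anyway). The verification that each $T_j$ is Zariski dense in $Y_j$, and that $\phi(W)\subset V'$ via Zariski continuity, is standard and you have it right.
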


In (4) we have written $L_R=L\otimes R$, which is a
finite-dimensional $R$-linear subspace of $\R[V]\otimes R=R[V]$.

\begin{proof}
By Corollary \ref{handlicher}, there exists a morphism $\phi\colon
X\to V$ of affine $\R$-varieties with $S\subset\phi(X(\R))$ such
that, for every real closed $R\supset\R$ and every $f\in L_R+R1
\subset R[V]$ with $f\ge0$ on $S_R$, the pull-back $\phi_R^*(f)$ is a
sum of squares in $R[X]$.
Using the argument of Remark \ref{sharpenfurther}, we can find a
closed irreducible subvariety $X'$ of $X$ satisfying $\phi(X')\subset
V'$ and $\dim(X')=V'$, for which $S'\cap\phi(X'(\R))$ is Zariski
dense in $V'$.
The restriction $\phi'\colon X'\to V'$ of $\phi$ is a dominant
morphism between irreducible $\R$-varieties of the same dimension.
By Proposition
\ref{gensmoth}, there is a non-empty open affine subset $W$ of $X'$
such that the restriction $\phi'|_W\colon W\to V'$ of $\phi'$
is smooth. Writing $Z=X'\setminus W$ we have $\dim(Z)<\dim(V')$, so
the set $\phi'(Z(\R))$ is not Zariski dense in $V'$. Therefore
$S'\cap\phi'(W(\R))$ is still Zariski dense in~$V'$.
In particular, we can find $\xi\in W(\R)$ such that $\eta:=
\phi'(\xi)$ lies in $S'$. Then it is clear that (1)--(4) are
satisfied for $\psi:=\phi'|_W\colon W\to V'$ and~$\xi$.
\end{proof}

The first construction is very easy and works for convex hulls of
suitable sets of dimension~$\ge3$. First recall:

\begin{lem}\label{tamslem}%
Let $A$ be a regular local $\R$-algebra, let $p_1,\dots,p_d$ be a
regular system of parameters of $A$. If $f(x_1,\dots,x_d)$ is a
form in $d$ variables over $\R$ that is not a sum of squares of
forms, then $f(p_1,\dots,p_d)\in A$ is not a sum of squares in~$A$.
\end{lem}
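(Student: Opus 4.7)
I would prove the contrapositive: assume $f(p_1,\dots,p_d)$ is a sum of squares in $A$, and deduce that the form $f$ is itself a sum of squares of forms in $\R[x_1,\dots,x_d]$. The key device is to pass to the $\m$-adic completion $\wh A$. Since $A$ is a regular local $\R$-algebra and $p_1,\dots,p_d$ is a regular system of parameters, Cohen's structure theorem produces a continuous $\R$-algebra isomorphism $\wh A\cong K[[x_1,\dots,x_d]]$ sending $x_i\mapsto p_i$, where $K$ is the residue field of $A$. I will take $K=\R$ (which is the situation that actually arises in the applications of \ref{lemconst}, where $A=\scrO_{W,\xi}$ for an $\R$-rational point $\xi$).

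Given the identification $\wh A=\R[[p_1,\dots,p_d]]$, suppose $f(p)=\sum_{j=1}^Ng_j^2$ with $g_j\in A\subset\wh A$. Expand each $g_j$ as a power series in the $p_i$, and let $g_j^{(m_j)}$ denote the nonzero homogeneous component of $g_j$ of smallest total degree $m_j$; set $m=\min_j m_j$. The lowest nonzero homogeneous component of $\sum_j g_j^2$ is then
$$\sum_{j:\,m_j=m}\bigl(g_j^{(m)}\bigr)^2,$$
which has degree $2m$ and which is nonzero as a sum of squares of nonzero real forms. On the other hand $f(p_1,\dots,p_d)\in\R[[p_1,\dots,p_d]]$ is already homogeneous of degree $e:=\deg(f)$ (the $p_i$ being the formal variables). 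Matching lowest homogeneous parts forces $2m=e$ and
$$f(x_1,\dots,x_d)\>=\>\sum_{j:\,m_j=m}\bigl(g_j^{(m)}\bigr)^2,$$
exhibiting $f$ as a sum of squares of forms of degree $e/2$ in $\R[x_1,\dots,x_d]$, contradicting the hypothesis.

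The only substantial obstacle is invoking Cohen's structure theorem in the precise form that the isomorphism $\wh A\cong\R[[x_1,\dots,x_d]]$ can be arranged to send the formal variables to the prescribed parameters $p_i$, rather than to some abstract other choice of parameters; this is standard but has to be set up carefully. Once that identification is in hand, the argument reduces to the elementary observation that the leading homogeneous part of a sum of squares of power series over $\R$ is again a sum of squares, with summands one can read off explicitly.
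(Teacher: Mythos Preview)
Your argument is correct and is essentially the paper's approach: the paper invokes the associated graded ring $\mathrm{gr}_\m(A)\cong K[x_1,\dots,x_d]$, and your extraction of leading homogeneous parts in $\wh A\cong K[[x_1,\dots,x_d]]$ is precisely the initial-form map to $\mathrm{gr}_\m(A)$. Phrasing it via the associated graded ring has the minor advantage of bypassing your ``substantial obstacle'' (Cohen's theorem with prescribed parameters), since $\mathrm{gr}_\m(A)$ is generated over $K=A/\m$ by the classes of the $p_i$ by the very definition of a regular system of parameters; your caveat that one needs $K=\R$ (or at least $K$ formally real, so that a sum of squares of nonzero forms over $K$ is nonzero) is accurate and applies equally to both formulations.
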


The proof uses the associated graded ring of $A$, see
\cite{sch:tams}, proof of Proposition 6.1.

\begin{prop}\label{1stconstneu}%
Assume that $(*)$ as in \ref{standasspts} is given. If for every
$\eta\in S'$ there exists $f\in L+\R1\subset\R[V]$ with $f|_S\ge0$
such that $f$ is not a sum of squares in $\wh\scrO_{V,\eta}$, then
the closed convex hull $\ol{\conv(\varphi_L(S))}$
in $\A_L(\R)\cong\R^{\dim(L)}$ fails to be a spectrahedral shadow.
\end{prop}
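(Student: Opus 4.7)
The plan is to argue by contradiction. Assuming $\ol{\conv(\varphi_L(S))}$ is a spectrahedral shadow, I would invoke Lemma \ref{lemconst} to produce a morphism $\psi\colon W\to V'$ and a point $\xi\in W(\R)$ satisfying conditions (1)--(4) there, so that $\psi$ is smooth at $\xi$ and $\eta:=\psi(\xi)\in S'$. The strategy is then to play the smoothness-transfer Lemma \ref{sosglatt} against the defining hypothesis at~$\eta$.

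Before doing so, I want to arrange that $\eta$ lies on no irreducible component of $V$ other than $V'$, so that the natural ring map $\wh\scrO_{V,\eta}\to\wh\scrO_{V',\eta}$ is an isomorphism. (This matters because the hypothesis is phrased in terms of $\wh\scrO_{V,\eta}$, while Lemma \ref{sosglatt} will only produce a sum of squares in $\wh\scrO_{V',\eta}$.) This is a Zariski-density step: letting $V''$ denote the union of the other components of $V$, the intersection $V'\cap V''$ is a proper Zariski closed subset of $V'$, so its preimage under $\psi$ is a proper closed subset of $W$. Since the construction in the proof of \ref{lemconst} guarantees that the points $\xi\in W(\R)$ with $\psi(\xi)\in S'$ are Zariski dense in $W$, I may refine the choice of $\xi$ to lie outside this bad locus while preserving properties (1)--(4).

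Having arranged $\wh\scrO_{V,\eta}=\wh\scrO_{V',\eta}$, I apply the hypothesis at $\eta\in S'$ to find $f\in L+\R1$ with $f|_S\ge0$ such that $f$ is not a sum of squares in $\wh\scrO_{V,\eta}=\wh\scrO_{V',\eta}$. On the other hand, condition (4) of Lemma \ref{lemconst} applied with $R=\R$ says that $\psi^*(f)$ is a sum of squares in $\R[W]$, hence a fortiori a sum of squares in $\wh\scrO_{W,\xi}$. Since $\psi$ is smooth at $\xi$, Lemma \ref{sosglatt} then forces $f$ to be a sum of squares in $\wh\scrO_{V',\eta}$, which is the desired contradiction. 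The only step requiring care is the genericity argument pinning $\eta$ to a single component of $V$; the rest is a direct chain of Lemmas \ref{lemconst} and \ref{sosglatt}.
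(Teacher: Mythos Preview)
Your argument is correct and follows the paper's approach exactly: assume a semidefinite representation, invoke Lemma \ref{lemconst}, and derive a contradiction by combining property~(4) with Lemma \ref{sosglatt}. You are in fact more careful than the paper's own proof, which does not explicitly justify the identification $\wh\scrO_{V,\eta}\cong\wh\scrO_{V',\eta}$; your genericity step (choosing $\eta\in S'\cap\psi(W(\R))$ off the other components of $V$, which is possible since this set is Zariski dense in $V'$ by the proof of \ref{lemconst}) is a legitimate and necessary refinement when $V$ is reducible. One small wording point: what the proof of \ref{lemconst} actually shows is that $S'\cap\psi(W(\R))$ is Zariski dense in $V'$, not that the corresponding $\xi$'s are Zariski dense in $W$; but the former is all you need to avoid the proper closed subset $V'\cap V''$.
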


\begin{proof}
Assume that the closed convex hull is a spectrahedral shadow.
Then there exists a
morphism $\psi\colon W\to V'$ together with a point $\xi\in W(\R)$ as
in Lemma \ref{lemconst}. Let $\eta=\psi(\xi)\in S'$, and choose $f\in
L+\R1$ for the given $\eta$ as in the hypothesis. On the one hand,
$\psi^*(f)\in\R[W]$ should be a sum of squares in $\R[W]$, by
property (4) of $\psi$ in \ref{lemconst}. On the other hand, since
$\psi$ is smooth at $\xi$, this contradicts Lemma \ref{sosglatt}, by
the choice of $f$.
\end{proof}

\begin{example}\label{3dex}%
Let $x=(x_1,x_2,x_3)$ and put $L=\{f\in\R[x]\colon\deg(f)\le6$,
$f(0)=0\}$, a linear subspace of $\R[x]$ with $\dim(L)=83$.
For every $\xi\in\R^3$ there exists $f\in L+\R1$ with $f\ge0$ on
$\R^3$ such that $f$ is not a sum of squares in
$\wh\scrO_{\A^3,\xi}$ (the ring of formal power series in
$x_1-\xi_1$, $x_2-\xi_2$, $x_3-\xi_3$). Indeed, this follows from
\ref{tamslem}, e.g.\ by taking $f=p(x_1-\xi_1,x_2-\xi_2,x_3-\xi_3)$
where $p$ is any ternary sextic form that is psd but not a sum of
squares (for instance the Motzkin form). Let
$\varphi=\varphi_L\colon\A^3\to\A_L\cong
\A^{83}$ be the Veronese type embedding associated with $L$. For any
\sa\ set $S\subset\R^3$ with non-empty interior, it follows from
Proposition \ref{1stconstneu} that the closed convex hull of
$\varphi(S)$ in $\R^{83}$ has no semidefinite representation.
\end{example}

\begin{example}\label{4dex}%
Similarly, let $x=(x_1,x_2,x_3,x_4)$ and $L=\{f\in\R[x]\colon
\deg(f)\le4$, $f(0)=0\}$. Then $\dim(L)=69$.
Using psd, non-sos quartic forms in four variables and proceeding
similarly as in \ref{3dex}, we find that the closed convex hull of
$\varphi_L(S)$ in $\R^{69}$ is not a spectrahedral shadow,
for any \sa\ set $S\subset\R^4$ with nonempty interior.
\end{example}

\begin{rem}\label{parsim1}%
The reasoning used in the preceding examples was still very coarse.
With
a finer look we arrive at constructions that are considerably more
parsimonious. For example, if in \ref{3dex} we work with the Motzkin
form $p$, we can find a linear subspace $L\subset\R[x]$ of dimension
$\dim(L)=27$ such that $p(x-\xi)\in\R1+L$ for every $\xi\in\R^3$,
resulting in an embedding $\R^3\to\R^{27}$ with the property of
\ref{3dex}. Similarly, if in \ref{4dex} we work with the Choi-Lam
form $p(x)=x_1^2x_2^2+x_2^2x_3^2+x_3^2x_1^2+x_4^4-4x_1x_2x_3x_4$,
we can find a linear subspace of dimension~$19$ with the desired
property.
\end{rem}

\begin{rem}
The construction of convex sets without semidefinite representation
via Proposition
\ref{1stconstneu} did not employ the full strength of the ``only~if''
part of Theorem \ref{maininhom}. Indeed, it wasn't used anywhere that
pull-backs of non-negative linear polynomials are \emph{uniformly}
sums of squares in $\R[X]$ (see Remark \ref{uniform}). In turn, the
argumentation in \ref{3dex}\,--\,\ref{parsim1} applies only to convex
hulls of sets of dimension at least three. We now refine the
construction. This will provide us with convex hulls of
two-dimensional sets without a semidefinite representation.
\end{rem}

\begin{lab}\label{rbetc}%
Let $R$ be a real closed field containing $\R$. In the sequel, we
always denote by $B$ the convex hull of $\R$ in $R$, so $B=\{a\in
R\colon\ex n\in\N$ $-n<a<n\}$. Note that $B$ is a valuation ring
(called the canonical valuation ring of $R$), with field of fractions
$R$, maximal ideal $\m_B=\{a\in R\colon\all n\in\N$ $|na|<1\}$ and
residue field $B/\m_B=\R$. The reduction map $B\to B/\m_B=\R$ will be
denoted $a\mapsto\ol a$. Nonzero elements in $\m_B$ will be called
\emph{infinitesimals} of~$R$.

An example is given by the field $R=\bigcup_{q\ge1}\R((t^{1/q}))$ of
Puiseux series with real coefficients (see \cite[Section 2.6]{bpr}).
The sign of $0\ne f=\sum_{m\ge m_0}c_mt^{m/q}\in R$ with $c_m\in\R$
and $c_{m_0}\ne0$ is the sign of $c_{m_0}$, and the
valuation (or order) of $f$ is $o(f)=\frac{m_0}q$. The valuation ring
$B$ resp.\ its maximal ideal $\m_B$ consists of all $f\in R$ with
$o(f)\ge0$ resp.\ $o(f)>0$.

Let $V$ be an affine $\R$-variety. We write $B[V]:=\R[V]\otimes B$
(tensor product over~$\R$).
If $\phi\colon X\to V$ is a morphism of affine $\R$-varieties, then
$\phi_R^*$ (resp.\ $\phi_B^*$) denotes the induced homomorphism
$R[V]\to R[X]$ (resp.\ $B[V]\to B[X]$). Given $\xi\in V(\R)$, let
$M_{V,\xi}\subset B[V]$ be the kernel of the evaluation map $B[V]\to
B$, $f\mapsto f(\xi)$.

We start with several auxiliary results. The following lemma is
straightforward:
\end{lab}

\begin{lem}\label{straightf}%
Let $V$ be an affine $\R$-variety, let $\xi\in V(\R)$, and let
$R,\,B$ as in \ref{rbetc}. Then for every $N\ge1$ the natural
map
$$\bigl(\scrO_{V,\xi}/(\m_{V,\xi})^N\bigr)\otimes B\>\to\>
B[V]/(M_{V,\xi})^N$$
of $B$-algebras is an isomorphism.
\qed
\end{lem}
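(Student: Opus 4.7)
The plan is to split the claimed isomorphism into two elementary pieces. Write $\m=\m_{V,\xi}$ and $M=M_{V,\xi}$ throughout.

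First I would identify the ideal $M\subset B[V]$ with $\m\otimes_\R B=\m\cdot B[V]$. Tensoring the short exact sequence $0\to\m\to\R[V]\to\R\to 0$ with $B$ over $\R$ (which is exact because every module over the field $\R$ is flat) identifies $M$ as the kernel of the evaluation map $B[V]\to B$, $f\otimes b\mapsto f(\xi)b$. From $M=\m\cdot B[V]$ one directly reads off $M^N=\m^N\cdot B[V]=\m^N\otimes_\R B$, and flatness again yields a natural $B$-algebra isomorphism
\[
B[V]/M^N\>\cong\>(\R[V]/\m^N)\otimes_\R B.
\]

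Second, I would verify the classical isomorphism $\R[V]/\m^N\isoto\scrO_{V,\xi}/\m^N\scrO_{V,\xi}$. The key observation is that every $g\in\R[V]$ with $g(\xi)\neq 0$ becomes invertible modulo $\m^N$: writing $g=g(\xi)(1-\epsilon)$ with $\epsilon\in\m$, the truncated geometric series $g(\xi)^{-1}(1+\epsilon+\cdots+\epsilon^{N-1})$ provides an explicit inverse, since $\epsilon^N\in\m^N$. Hence $\R[V]/\m^N$ is already local, and its localization at $\m/\m^N$, namely $\scrO_{V,\xi}/\m^N\scrO_{V,\xi}$, coincides with it.

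Composing these two isomorphisms (after tensoring the second with $B$ over $\R$) gives the asserted isomorphism $(\scrO_{V,\xi}/\m^N)\otimes B\isoto B[V]/M^N$. I do not anticipate any genuine obstacle; the whole argument is a short diagram chase in elementary commutative algebra, which is exactly why the author qualifies the statement as ``straightforward''.
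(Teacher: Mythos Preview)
Your argument is correct and is precisely the standard verification one would write out; the paper itself gives no proof at all (the lemma is marked with a bare \qed\ and is labeled ``straightforward''), so there is nothing to compare against beyond noting that your two steps---flatness of $B$ over $\R$ to identify $B[V]/M^N$ with $(\R[V]/\m^N)\otimes B$, and the elementary fact that $\R[V]/\m^N$ is already local---are exactly the details the author is suppressing.
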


\begin{lem}\label{relred}%
Let $R,\,B$ be as in \ref{rbetc}, and let $X$ be an affine
$\R$-variety for which $X(\R)$ is Zariski dense in $X$. If
$g_1,\dots,g_r\in R[X]$ are such that $\sum_{i=1}^rg_i^2$ lies in
$B[X]$, then $g_i\in B[X]$ for every~$i$.
\end{lem}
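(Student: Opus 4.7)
The plan is to argue by contradiction: assume some $g_i \notin B[X]$ and rescale to obtain a reduction-mod-$\m_B$ identity that forces a contradiction via the Zariski density of $X(\R)$.

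First I would fix an $\R$-linear basis $(e_\alpha)_{\alpha \in I}$ of $\R[X]$. Then $R[X] = \R[X] \otimes_\R R$ and $B[X] = \R[X] \otimes_\R B$ decompose as $\bigoplus_\alpha R\cdot e_\alpha$ and $\bigoplus_\alpha B\cdot e_\alpha$ respectively, so membership in $B[X]$ can be tested coefficient by coefficient. Write $g_i = \sum_\alpha c_{i,\alpha}\,e_\alpha$ with only finitely many $c_{i,\alpha}\in R$ nonzero overall. Let $v\colon R^\times \to \Gamma$ be the canonical valuation associated with $B$, so $v(a)\ge0$ iff $a\in B$ and $v(a)>0$ iff $a\in\m_B$. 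Assuming not every $g_i$ lies in $B[X]$, there is a coefficient with $v(c_{i,\alpha})<0$; choose $t\in R$ equal to such a coefficient of minimal valuation, and set $m:=v(t)<0$.

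Next, I would replace $g_i$ by $h_i := t^{-1}g_i$. By the choice of $t$, every coefficient of $h_i$ lies in $B$, so $h_i\in B[X]$ for all $i$, and at least one coefficient of some $h_i$ is a unit in $B$ (i.e.\ has zero valuation). From $\sum_i g_i^2 \in B[X]$ we get
\[
\sum_{i=1}^r h_i^2 \>=\> t^{-2}\sum_{i=1}^r g_i^2 \>\in\> t^{-2}B[X].
\]
Since $v(t^{-2}) = -2m > 0$, we have $t^{-2}\in\m_B$, hence $\sum_i h_i^2 \in \m_B\cdot B[X]$.

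Now I would reduce modulo $\m_B$. The quotient $B[X]/\m_B B[X]$ is canonically $\R[X]$, so writing $\bar h_i\in\R[X]$ for the reduction of $h_i$, we obtain the identity $\sum_i \bar h_i{}^2 = 0$ in $\R[X]$. Evaluating at any $\xi\in X(\R)$ yields $\sum_i \bar h_i(\xi)^2 = 0$ in $\R$, and since a finite sum of real squares vanishes only if each square does, $\bar h_i(\xi)=0$ for all $i$ and all $\xi\in X(\R)$. By the Zariski density of $X(\R)$ in $X$, this forces $\bar h_i=0$ in $\R[X]$, i.e.\ $h_i\in\m_B\cdot B[X]$ for every $i$. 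But then every coefficient of each $h_i$ lies in $\m_B$, contradicting the fact that some coefficient of some $h_i$ is a unit in $B$. This contradiction forces $g_i\in B[X]$ for all $i$.

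The only mild subtlety I anticipate is making sure the minimum $m$ is well defined and attained; this is immediate because there are only finitely many nonzero coefficients $c_{i,\alpha}$ across all $i$, so choosing $t$ among them works. Everything else is linear algebra over $B$ together with the one decisive fact that $\R$ is a formally real field in which a sum of squares vanishes only trivially, combined with Zariski density of real points.
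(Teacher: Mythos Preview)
Your proof is correct and follows essentially the same approach as the paper's: rescale the $g_i$ by an element of $R^\times$ chosen so that all rescaled functions land in $B[X]$ with at least one having nonzero reduction in $\R[X]$, then use Zariski density of $X(\R)$ to see that a vanishing sum of squares in $\R[X]$ forces each summand to vanish, yielding a contradiction. The paper's version is more compressed (it phrases the conclusion as ``$1/c\in B$'' rather than setting up an explicit contradiction), but the mechanism is identical.
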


\begin{proof}
We can assume $g_i\ne0$ for every~$i$. Let $f:=\sum_{i=1}^rg_i^2$.
There is $0\ne c\in R$ such that $cg_i\in B[X]$ for every $i$ and
$\ol{cg_j}\ne0$ in $(B/\m_B)[X]=\R[X]$ for at least one index~$j$.
It follows that $c^2f\in B[X]$, and moreover $\ol{c^2f}=\sum_i
(\ol{cg_i})^2$ is nonzero in $(B/\m_B)[X]=\R[X]$, since $X(\R)$ is
Zariski dense in $X$.
Hence $c\notin\m_B$,
which means that $\frac1c\in B$, and so indeed $g_i\in
B[X]$ for every index~$i$.
\end{proof}

\begin{lem}\label{antwort6}%
Let $R,\,B$ be as in \ref{rbetc}, and let $\phi\colon X\to V$ be a
morphism of affine $\R$-varieties.
Assume that $X(\R)$ is Zariski dense in $X$, and that $\phi$ is
smooth at $\xi\in X(\R)$. If $f\in B[V]$ and $N\ge1$ are such that
$f$ is not a sum of squares in $B[V]$ modulo $(M_{V,\phi(\xi)})^N$,
then $\phi_R^*(f)\in R[X]$ is not a sum of squares in~$R[X]$.
\end{lem}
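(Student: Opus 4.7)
The plan is to prove the contrapositive: assuming $\phi_R^*(f)=\sum_{i=1}^rg_i^2$ with $g_i\in R[X]$, I will deduce that $f$ is a sum of squares in $B[V]$ modulo $(M_{V,\eta})^N$, where $\eta:=\phi(\xi)$. Since $f\in B[V]$, the pullback $\phi_R^*(f)=\phi_B^*(f)$ already lies in $B[X]$, so Lemma \ref{relred} (applicable since $X(\R)$ is Zariski dense in $X$) yields $g_i\in B[X]$ for every $i$, and the identity $\phi_B^*(f)=\sum_ig_i^2$ holds in $B[X]$.

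The core step is to exploit smoothness to produce an algebraic retraction that ``pushes'' this identity from $X$ back down to $V$, after a suitable truncation. By Proposition \ref{smothpowser}, smoothness of $\phi$ at $\xi$ provides an isomorphism $\wh\scrO_{X,\xi}\cong\wh\scrO_{V,\eta}[[t_1,\dots,t_m]]$ of $\wh\scrO_{V,\eta}$-algebras. The evaluation ``$t_j\mapsto0$'' is a retraction of the natural inclusion $\wh\scrO_{V,\eta}\hookrightarrow\wh\scrO_{X,\xi}$, and it carries $\wh\m_{X,\xi}$ into $\wh\m_{V,\eta}$ (hence $(\wh\m_{X,\xi})^N$ into $(\wh\m_{V,\eta})^N$), since $\wh\m_{X,\xi}=\wh\m_{V,\eta}\wh\scrO_{X,\xi}+(t_1,\dots,t_m)$. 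Using the canonical identifications $\scrO_{V,\eta}/(\m_{V,\eta})^N=\wh\scrO_{V,\eta}/(\wh\m_{V,\eta})^N$ and likewise for $X$, this induces an $\R$-algebra retraction
$$r\colon\ \scrO_{X,\xi}/(\m_{X,\xi})^N\>\longrightarrow\>\scrO_{V,\eta}/(\m_{V,\eta})^N$$
of the homomorphism induced by $\phi^*$. Tensoring $r$ with $B$ over $\R$ and invoking Lemma \ref{straightf} on both sides produces a $B$-algebra retraction
$$\rho\colon\ B[X]/(M_{X,\xi})^N\>\longrightarrow\>B[V]/(M_{V,\eta})^N$$
of the map induced by $\phi_B^*$.

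To finish, I reduce the identity $\phi_B^*(f)=\sum_ig_i^2$ modulo $(M_{X,\xi})^N$ and apply $\rho$. Because $\rho$ is a retraction of the map induced by $\phi_B^*$, the left-hand side maps to the class of $f$ in $B[V]/(M_{V,\eta})^N$, while the right-hand side maps to $\sum_i\rho(\ol{g_i})^2$. Choosing any lifts $h_i\in B[V]$ of $\rho(\ol{g_i})$, I obtain $f-\sum_ih_i^2\in(M_{V,\eta})^N$, contradicting the hypothesis. The main obstacle lies in constructing the retraction $\rho$: it is essential to use smoothness (rather than mere flatness) in order to obtain the power-series splitting at the level of completions, and then Lemma \ref{straightf} is needed to transfer this formal splitting to an honest retraction of $B$-algebras on the finite $\m$-adic truncations, where the sum-of-squares identity can be descended.
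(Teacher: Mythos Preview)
Your proof is correct and follows essentially the same approach as the paper: both arguments use Lemma~\ref{relred} to descend from $R[X]$ to $B[X]$, invoke Proposition~\ref{smothpowser} to obtain a retraction on the truncated local rings, and then apply Lemma~\ref{straightf} to transport this retraction over~$B$. The only cosmetic difference is the order---you apply Lemma~\ref{relred} first and then push down via the retraction, while the paper constructs the retraction first and applies Lemma~\ref{relred} last---and you spell out the ``$t_j\mapsto0$'' map and the identification of truncated completions more explicitly than the paper does.
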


\begin{proof}
Write $\eta=\phi(\xi)$. By Proposition \ref{smothpowser}, the
smoothness assumption implies that the completed local ring
$\wh\scrO_{X,\xi}$ is $\wh\scrO_{V,\eta}$-isomorphic to a power
series ring over $\wh\scrO_{V,\eta}$. In particular, this implies
that $\phi^*\colon\scrO_{V,\eta}/(\m_{V,\eta})^N\to\scrO_{X,\xi}/
(\m_{X,\xi})^N$ has a retraction, i.e.\ there is a homomorphism
$\rho\colon\scrO_{X,\xi}/(\m_{X,\xi})^N\to\scrO_{V,\eta}/
(\m_{V,\eta})^N$ for which the composition $\rho\comp\phi^*$ is the
identity on $\scrO_{V,\eta}/(\m_{V,\eta})^N$.
Tensoring with $B$ and using Lemma \ref{straightf} gives the
commutative diagram
$$\begin{xy}
\xymatrix{%
B[V] \ar[r]^{\phi_B^*} \ar[d] & B[X] \ar[d] \\
B[V]/(M_{V,\eta})^N \ar[r] & B[X]/(M_{X,\xi})^N}\end{xy}$$
whose bottom map has a retraction. From the hypothesis it therefore
follows that $\phi^*_B(f)\in B[X]$ cannot be a sum of squares in
$B[X]$. By Lemma \ref{relred}, $\phi_R^*(f)$ is not a sum of squares
in $R[X]$ either.
\end{proof}

\begin{lem}\label{trunclocringotimes}%
Let $R,\,B$ be as in \ref{rbetc},
let $V$ be an affine $\R$-variety, and let $\xi\in V(\R)$ be a
nonsingular $\R$-point. If $u_1,\dots,u_d\in\R[V]$ form a regular
parameter sequence of $V$ at $\xi$, there is an isomorphism
$$B[V]/(M_{V,\xi})^N\>\cong\>B[x_1,\dots,x_d]/\langle x_1,\dots,x_d
\rangle^N$$
of $B$-algebras which makes the cosets of $u_i$ and $x_i$ correspond
to each other, for $i=1,\dots,d$.
\end{lem}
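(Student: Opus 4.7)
The plan is to first establish the analogous statement purely over $\R$---that $\scrO_{V,\xi}/(\m_{V,\xi})^N$ is isomorphic as an $\R$-algebra to $\R[x_1,\dots,x_d]/\langle x_1,\dots,x_d\rangle^N$ via $u_i\mapsto x_i$---and then to tensor this isomorphism with $B$ over $\R$ and combine it with Lemma \ref{straightf} to reach the desired $B$-algebra isomorphism.

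For the $\R$-algebra isomorphism, I would use that $\xi$ being a nonsingular $\R$-point means $\scrO_{V,\xi}$ is a regular local $\R$-algebra of Krull dimension $d$ with residue field $\R$. The assumption that $u_1,\dots,u_d$ form a regular system of parameters translates to the statement that they generate $\m_{V,\xi}$ and that their images form an $\R$-basis of the cotangent space $\m_{V,\xi}/(\m_{V,\xi})^2$. I would then consider the $\R$-algebra homomorphism $\R[x_1,\dots,x_d]\to\scrO_{V,\xi}/(\m_{V,\xi})^N$ sending $x_i\mapsto u_i$; since the $u_i$ lie in $\m_{V,\xi}$, this factors through $\R[x_1,\dots,x_d]/\langle x_1,\dots,x_d\rangle^N$. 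The induced map is surjective because the $u_i$ generate $\m_{V,\xi}$, and checking equality of $\R$-dimensions piece by piece on the $\m$-adic filtration---via the standard fact that the associated graded ring of a regular local ring with residue field $\R$ is a polynomial algebra over $\R$ in $d$ variables---forces it to be an isomorphism.

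Finally, tensoring this $\R$-algebra isomorphism with $B$ over $\R$ yields a $B$-algebra isomorphism
\[
B[x_1,\dots,x_d]/\langle x_1,\dots,x_d\rangle^N \>\isoto\> \bigl(\scrO_{V,\xi}/(\m_{V,\xi})^N\bigr)\otimes_\R B,
\]
since the left-hand side is obtained from $\R[x_1,\dots,x_d]/\langle x_1,\dots,x_d\rangle^N$ by extension of scalars $\R\to B$, and formation of $\R$-algebra quotients commutes with this flat base change. Composing with the isomorphism of Lemma \ref{straightf} produces the asserted isomorphism, with the correspondence $u_i\leftrightarrow x_i$ visible throughout the construction. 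The only substantive step is the regular-local-ring computation in the middle paragraph; the rest is routine bookkeeping with tensor products.
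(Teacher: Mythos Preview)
Your proof is correct and follows essentially the same route as the paper: establish the $\R$-algebra isomorphism $\R[x_1,\dots,x_d]/\langle x\rangle^N\cong\scrO_{V,\xi}/(\m_{V,\xi})^N$, tensor with $B$, and apply Lemma~\ref{straightf}. The only cosmetic difference is that the paper obtains the $\R$-level isomorphism by truncating the completion isomorphism $\R[[x_1,\dots,x_d]]\isoto\wh\scrO_{V,\xi}$, whereas you argue directly via a dimension count using the associated graded ring; these are equivalent standard facts about regular local rings.
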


\begin{proof}
Clear from the isomorphism $\R[[x_1,\dots,x_d]]\to\wh\scrO_{V,\xi}$
sending $x_i$ to $u_i$, and from Lemma \ref{straightf}.
\end{proof}

The next result is a key observation. For $R\ne\R$ a proper real
closed field extension of $\R$, it implies that there exist
polynomials $f\in B[x_1,x_2]$ with $f\ge0$ on $R^2$ such that $f$ is
not a sum of squares in $B[x_1,x_2]/\langle x_1,x_2\rangle^N$, for
$N$ sufficiently large. Note that any such $f$ is a sum of squares in
$R[[x_1,x_2]]$, and hence in $R[x_1,x_2]/\langle x_1,x_2\rangle^N$
for all $N$~\cite{sch:local}.

\begin{prop}\label{antwort}%
Let $f\in\R[x_0,x]=\R[x_0,\dots,x_n]$ be homogeneous of degree~$d$,
and assume that $f$ is not a sum of squares in $\R[x_0,x]$. Let
$R,\,B$ be as in \ref{rbetc}. If $\epsilon>0$ is an infinitesimal
in $R$, the polynomial $f(\epsilon,x)\in B[x]$ is not a sum of
squares in $B[x]\big/\langle x_1,\dots,x_n\rangle^{d+1}B[x]$.
\end{prop}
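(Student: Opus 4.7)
I would argue by contradiction. Suppose $f(\epsilon,x) = \sum_{i=1}^N h_i(x)^2 + r(x)$ in $B[x]$, with $r \in \langle x\rangle^{d+1}B[x]$ and $h_i\in B[x]$; the goal is to derive that $f$ itself is a sum of squares in $\R[x_0,x]$, contradicting the hypothesis. The key move is a rescaling of variables tailored to the homogeneity of $f$.

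Substituting $x_j\mapsto\epsilon y_j$ and exploiting that $f(\epsilon,\epsilon y)=\epsilon^d f(1,y)$, together with $r(\epsilon y)\in\epsilon^{d+1}B[y]$ (since every monomial in $r$ has total $x$-degree $\ge d+1$), the relation becomes
$$\sum_i h_i(\epsilon y)^2 \;=\; \epsilon^d f(1,y)-\epsilon^{d+1}s(y)\qquad\text{in }B[y]$$
for some $s\in B[y]$. Because $R$ is real closed and $\epsilon>0$, the element $\epsilon^{d/2}\in R^\times$ exists, so dividing by $\epsilon^d$ in $R[y]$ gives
$$\sum_i\phi_i(y)^2 \;=\; f(1,y)-\epsilon\,s(y),\qquad \phi_i(y):=h_i(\epsilon y)/\epsilon^{d/2}\in R[y].$$
The crucial observation is that the right-hand side lies in $B[y]$: $f(1,y)\in\R[y]\subset B[y]$ and $\epsilon s(y)\in\epsilon B[y]\subset B[y]$. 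Hence $\sum\phi_i^2\in B[y]$, and Lemma~\ref{relred} (applied to $X=\A^n$, whose $\R$-points are Zariski dense) then forces $\phi_i\in B[y]$ for every $i$. Reducing modulo $\m_B$, the term $\epsilon s(y)$ dies and one obtains $\sum_i\overline{\phi_i}(y)^2=f(1,y)$ in $\R[y]$, exhibiting $f(1,y)$ as a sum of squares in $\R[y]$.

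Finally, for $d$ even, $f(1,y)$ has $y$-degree $\le d$, so the SOS summands $q_j$ have degree $\le d/2$ by formal reality; homogenizing via $Q_j(x_0,y):=x_0^{d/2}q_j(y/x_0)\in\R[x_0,y]$ yields $f(x_0,y)=\sum_j Q_j(x_0,y)^2$, contradicting the hypothesis that $f$ is not a sum of squares. The main obstacle is the descent $\phi_i\in R[y]\Rightarrow\phi_i\in B[y]$, which is not apparent from the definition since dividing by $\epsilon^{d/2}$ can introduce negative valuations; what makes it work is that the sum of squares lies in $B[y]$, and the formal reality of $\R[y]$ acting through Lemma~\ref{relred} (preventing cancellation of leading terms in a sum of squares) propagates $B$-integrality from the sum down to each summand. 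Once this descent is secured, the reduction modulo $\m_B$ and the subsequent homogenization are routine.
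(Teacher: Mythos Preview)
Your argument is correct and follows essentially the same route as the paper's proof: assume a sum-of-squares representation modulo $\langle x\rangle^{d+1}$, substitute $x\mapsto\epsilon x$, divide through by $\epsilon^d$, invoke Lemma~\ref{relred} to force the rescaled summands into $B[x]$, and reduce modulo $\m_B$ to obtain $f(1,x)$ as a sum of squares in $\R[x]$. Your final homogenization step (for $d$ even) makes explicit the passage from ``$f(1,x)$ is a sum of squares'' to ``$f$ is a sum of squares'' that the paper compresses into the phrase ``contradicting the hypothesis''.
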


\begin{proof}
Assume we have an identity $f(\epsilon,x)+g(x)=\sum_jp_j(x)^2$ where
$g(x)\in\langle x\rangle^{d+1}B[x]$ and $p_j(x)\in B[x]$. Replacing
$x$ by $\epsilon x$ yields
$$\epsilon^df(1,x)+g(\epsilon x)\>=\>\sum_jp_j(\epsilon x)^2.
\eqno(*)$$
The left hand side is divisible by $\epsilon^d$ in $B[x]$. By Lemma
\ref{relred}, the polynomial $q_j(x):=\epsilon^{-d/2}p_j(\epsilon x)
\in R[x]$ lies in $B[x]$ for every~$j$.
Putting $g'(x)=\epsilon^{-(d+1)}g(\epsilon x)$ we have $g'(x)\in
B[x]$, therefore dividing $(*)$ by $\epsilon^d$ gives
$$f(1,x)+\epsilon g'(x)\>=\>\sum_jq_j(x)^2,$$
an identity in $B[x]$. Reducing coefficient-wise modulo $\m_B$
implies that $f(1,x)$ is a sum of squares in $\R[x]$, contradicting
the hypothesis.
\end{proof}

\begin{prop}\label{2ndconstneu}%
Assume that $(*)$ as in \ref{standasspts} is given, and assume that
$R\supset\R$, $R\ne\R$ is a real closed field with canonical
valuation ring $B$ (\ref{rbetc}). For every $\eta\in S'$ assume that
there exists $f\in L_B+B1\subset B[V]$ with $f\ge0$ on $S_R$ such
that $f$ is not a sum of squares in $B[V]/(M_{V,\eta})^N$ for some
$N\ge1$. Then the closed convex hull $\ol{\conv(\varphi_L(S))}$ in
$\A_L(\R)\cong\R^{\dim(L)}$ is not a spectrahedral shadow.
\end{prop}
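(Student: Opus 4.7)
The plan is to argue by contradiction, running exactly the same scheme as in the proof of Proposition \ref{1stconstneu}, but replacing Lemma \ref{sosglatt} with its sharper refinement Lemma \ref{antwort6}. Suppose that $\ol{\conv(\varphi_L(S))}$ is a spectrahedral shadow. Then Lemma \ref{lemconst} produces a morphism $\psi\colon W\to V'$ of affine $\R$-varieties together with a point $\xi\in W(\R)$ for which $W(\R)$ is Zariski dense in $W$, the image $\eta:=\psi(\xi)$ lies in $S'$, $\psi$ is smooth at $\xi$, and moreover property~(4) of \ref{lemconst} is available: for every real closed field $R'\supset\R$ and every $f\in L_{R'}+R'1$ with $f\ge0$ on $S_{R'}$, the pull-back $\psi_{R'}^*(f)$ is a sum of squares in $R'[W]$.

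Next I apply the hypothesis of the proposition to this particular $\eta\in S'$: it yields some $f\in L_B+B1\subset B[V]$ with $f\ge0$ on $S_R$ such that $f$ is not a sum of squares in $B[V]/(M_{V,\eta})^N$ for some $N\ge 1$. The inclusion $L_B+B1\subset L_R+R1\subset R[V]$ is immediate from $B\subset R$, so property~(4) of $\psi$, applied with the given extension $R\supset\R$, forces $\psi_R^*(f)\in R[W]$ to be a sum of squares in $R[W]$. On the other hand, Lemma \ref{antwort6} applies directly to the data $(\psi,\xi,\eta,f,N)$: its hypotheses — $W(\R)$ Zariski dense in $W$, $\psi$ smooth at $\xi$, $\psi(\xi)=\eta$, and $f\in B[V]$ not a sum of squares in $B[V]/(M_{V,\eta})^N$ — are exactly what we have just assembled, and its conclusion is that $\psi_R^*(f)$ is \emph{not} a sum of squares in $R[W]$. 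The contradiction completes the argument.

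There is essentially no obstacle here: all the substance has already been packaged into Lemma \ref{lemconst} (which extracts a smooth lifting $\psi$ from the existence of a semidefinite representation, valid over every real closed extension of $\R$) and Lemma \ref{antwort6} (which turns valuation-theoretic non-representability of $f$ modulo a power of $M_{V,\eta}$ into genuine non-representability of $\psi_R^*(f)$ in $R[W]$). The only point to verify explicitly is the trivial inclusion $L_B+B1\subset L_R+R1$, which allows property~(4) of the constructed $\psi$ to be triggered by the $f$ furnished by the hypothesis.
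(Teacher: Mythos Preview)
Your proof is correct and follows essentially the same approach as the paper's own proof: assume a semidefinite representation, invoke Lemma~\ref{lemconst} to obtain $\psi\colon W\to V'$ and $\xi\in W(\R)$, apply the hypothesis at $\eta=\psi(\xi)$ to produce $f$, and then derive a contradiction between property~(4) of Lemma~\ref{lemconst} and Lemma~\ref{antwort6}. Your added remark that $L_B+B1\subset L_R+R1$ (justifying the use of property~(4) for an $f$ with coefficients in $B$) makes explicit a step the paper leaves implicit.
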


(Here $L_B:=L\otimes B\subset\R[V]\otimes B=B[V]$.)

\begin{proof}
Assume that the closed convex hull is a spectrahedral shadow.
Then there exists
$\psi\colon W\to V'$ together with $\xi\in W(\R)$, as in Lemma
\ref{lemconst}. Let $\eta=\psi(\xi)\in S'$, and choose $f\in L_B+B1$
for the given $\eta$ as in \ref{2ndconstneu}. On the one hand,
$\psi_R^*(f)\in R[W]$ should be a sum of squares in $R[W]$, by
property (4) of $\psi$ in \ref{lemconst}. On the other hand,
$\psi_R^*(f)$ is not a sum of squares in $R[W]$ by Lemma
\ref{antwort6}. This contradiction proves Proposition
\ref{2ndconstneu}.
\end{proof}

\begin{example}\label{ex27}%
Let $x=(x_1,x_2)$, and put $L=\{f\in\R[x]\colon\deg(f)\le6$,
$f(0)=0\}$, a linear subspace of $\R[x]$ of dimension~$27$.
Consider the associated embedding $\phi_L\colon\A^2\to\A_L\cong
\A^{27}$. If $S\subset\R^2$ is any \sa\ set with non-empty interior,
the closed convex hull of $\varphi_L(S)$ in $\R^{27}$ is not a
spectrahedral shadow.
Indeed, choose a sextic form $p\in\R[x_0,x_1,x_2]$ that is psd
but not a sum of squares, and let $0\ne\epsilon$ be an infinitesimal
of $R$. Given $\xi\in S$, the polynomial $f:=p(\epsilon,x_1-\xi_1,
x_2-\xi_2)\in B[x_1,x_2]$ lies in $L_B+B1$, and $f$ is not a sum of
squares in $B[x]/(M_{\A^2,\xi})^7$
by Proposition \ref{antwort} and Lemma \ref{trunclocringotimes}.
It follows from Proposition \ref{2ndconstneu} that
$\ol{\conv(\varphi_L(S))}$ has no semidefinite representation.
\end{example}

\begin{rem}\label{parsim2}%
Similar to Remark \ref{parsim1}, we can arrive at examples of smaller
dimension when we take a finer look. For instance, consider the
Motzkin form $p=x_1^6+x_0^4x_2^2+x_0^2x_2^4-3x_0^2x_1^2x_2^2$ in
$\R[x_0,x_1,x_2]$. This form is psd but not a sum of squares. Let
$L\subset\R[x,y]$ be the linear space spanned by the 14 monomials
$x^i$ ($1\le i\le6$), $y^i$ ($1\le i\le4$) and $x^iy^j$ ($i,j=1,2$).
For any semi-algebraic set $S\subset\R^2$ with non-empty interior,
the closed convex hull of $\varphi_L(S)$ in $\R^{14}$ fails to be a
spectrahedral shadow.
Indeed, for any choice of $\epsilon,\,\xi_1,\,
\xi_2\in B$, the polynomial $f:=p(\epsilon,\,x_1-\xi_1,\,x_2-\xi_2)
\in B[x_1,x_2]$ lies in $L_B+B1$. So we can argue as in \ref{ex27}.
\end{rem}

A reasoning as in Examples \ref{ex27} or \ref{parsim2} can also be
applied to affine $\R$-varieties $V$ different from $\A^n$, thanks to
the following lemma:

\begin{lem}\label{2ndconstnotsos}%
Let $R,\,B$ be as in \ref{rbetc}, and assume $R\ne\R$.
Let $V$ be an affine $\R$-variety, let $V_\reg\subset V$ be its
smooth locus, let $\eta\in V_\reg(R)$, and
let $q_1,\dots,q_n\in B[V]$ be a regular parameter sequence for
$\scrO_{V_R,\eta}$. Moreover let $f\in\R[x_0,\dots,x_n]$ be a form
that is psd but not a sum of squares. If $\epsilon\ne0$ is any
infinitesimal in $R$, then $f(\epsilon,q_1,\dots,q_n)\in B[V]$ is psd
on $V(R)$, but is not a sum of squares in $B[V]/(M_{V,\eta})^N$ for
$N\ge\deg(f)+1$.
\end{lem}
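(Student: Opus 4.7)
The plan is to establish the two assertions separately; the second is where the substance lies. For the psd claim, since $f\in\R[x_0,\dots,x_n]$ is psd on $\R^{n+1}$ by hypothesis, the Tarski transfer principle gives $f\ge0$ on $R^{n+1}$, and in particular $f(\epsilon,q_1(\xi),\dots,q_n(\xi))\ge0$ in $R$ for every $\xi\in V(R)$. Hence $f(\epsilon,q_1,\dots,q_n)\in B[V]$ is psd on $V(R)$.

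For the non-sum-of-squares claim, the plan is to reduce to Proposition \ref{antwort} via a local isomorphism. Since $\eta\in V_\reg(R)$ and $q_1,\dots,q_n\in B[V]$ form a regular parameter sequence for $\scrO_{V_R,\eta}$, the completion $\wh\scrO_{V_R,\eta}$ is $R$-isomorphic to $R[[x_1,\dots,x_n]]$ via $q_i\mapsto x_i$; truncating modulo the $N$-th power of the maximal ideal identifies $\scrO_{V_R,\eta}/\m_{V_R,\eta}^N$ with $R[x_1,\dots,x_n]/\langle x_1,\dots,x_n\rangle^N$. Interpreting $M_{V,\eta}\subset B[V]$ as the kernel of the evaluation $g\mapsto g(\eta)$ (the natural extension of the definition in \ref{rbetc} from $\R$-points to $R$-points), one descends this via an analog of Lemma \ref{straightf} to a $B$-algebra isomorphism
\[
\Phi_N\colon B[V]/(M_{V,\eta})^N \isoto B[x_1,\dots,x_n]/\langle x_1,\dots,x_n\rangle^N
\]
sending the class of $q_i$ to the class of $x_i$, exactly as in Lemma \ref{trunclocringotimes}.

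Under $\Phi_N$ the element $f(\epsilon,q_1,\dots,q_n)$ corresponds to $f(\epsilon,x_1,\dots,x_n)\bmod\langle x_1,\dots,x_n\rangle^N$. By Proposition \ref{antwort}, $f(\epsilon,x_1,\dots,x_n)$ is not a sum of squares in $B[x_1,\dots,x_n]/\langle x_1,\dots,x_n\rangle^{d+1}$ with $d=\deg(f)$; since the canonical surjection from $B[x]/\langle x\rangle^N$ onto $B[x]/\langle x\rangle^{d+1}$ carries sums of squares to sums of squares, $f(\epsilon,x_1,\dots,x_n)$ is not a sum of squares in $B[x_1,\dots,x_n]/\langle x_1,\dots,x_n\rangle^N$ either, for any $N\ge d+1$. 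Transporting back via $\Phi_N$ yields the claim. The main technical obstacle is verifying the isomorphism $\Phi_N$ for general regular $R$-points $\eta$ in place of $\R$-rational points; once that extension of Lemma \ref{trunclocringotimes} is in hand, the rest of the argument is routine.
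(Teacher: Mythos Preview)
Your proposal is correct and follows essentially the same route as the paper: the psd claim is immediate, and the non-sos claim is reduced to Proposition~\ref{antwort} via the $B$-algebra isomorphism $B[V]/(M_{V,\eta})^N\cong B[x]/\langle x\rangle^N$ sending $q_i\mapsto x_i$, exactly as in Lemma~\ref{trunclocringotimes}. The paper's proof simply cites Lemma~\ref{trunclocringotimes} without further comment; you are right that this lemma is stated only for $\eta\in V(\R)$, whereas the present lemma is phrased for $\eta\in V_\reg(R)$, and in fact every use of this lemma in the paper (Example~\ref{ex27}, Remark~\ref{parsim2}, the subsequent theorem) takes $\eta$ to be an $\R$-point, so the extra generality is never exercised.
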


\begin{proof}
Put $p:=f(\epsilon,q_1,\dots,q_n)$. It is clear that $p\ge0$ on
$V(R)$.
Let $x=(x_1,\dots,x_n)$. We have an isomorphism
$B[x]/\langle x\rangle^N\isoto B[V]/(M_{V,\eta})^N$ for every
$N\ge1$, that sends $x_i$ to $q_i$ for $i=1,\dots,n$ (Lemma
\ref{trunclocringotimes}). It maps the residue class of
$f(\epsilon,x)$ to the residue class of $p$. By Proposition
\ref{antwort}, this element (in either ring) is not a sum of
squares when $N>\deg(f)$.
\end{proof}

Summing up, we can conclude:

\begin{thm}
Let $S\subset\R^m$ be any \sa\ set with $\dim(S)\ge2$. Then, for
some $k\ge1$, there exists a polynomial map $\varphi\colon S\to
\R^k$ such that the closed convex hull of $\varphi(S)$ in $\R^k$ has
no semidefinite representation.
\end{thm}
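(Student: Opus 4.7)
The plan is to reduce to Proposition \ref{2ndconstneu}: I will exhibit, for an appropriate choice of $V$, $V'$, $S'$, and a finite-dimensional $L\subset\R[V]$, a single family of ``bad'' polynomials $f_{\eta,\epsilon}$ (one for each $\eta\in S'$) produced by Lemma \ref{2ndconstnotsos}, establishing the hypothesis of \ref{2ndconstneu}. First I would let $d:=\dim(S)\ge2$ and take $V\subset\A^m$ to be the Zariski closure of $S$; then some irreducible component $V'$ of $V$ has $\dim(V')=d$ and satisfies that $S\cap V'(\R)$ is Zariski dense in $V'$ (otherwise $V$ could be shrunk). Writing $V'_\reg$ for the smooth locus of $V'$ and $V'_0$ for the complement in $V'$ of the remaining components of $V$, the \sa\ set $S':=S\cap V'_\reg(\R)\cap V'_0(\R)$ is still Zariski dense in $V'$, and every $\eta\in S'$ is a smooth point of $V$ with $\scrO_{V,\eta}=\scrO_{V',\eta}$ regular of dimension~$d$.

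Next I would arrange a uniform choice of local parameters across $S'$. At each $\eta\in S'$, some $d$-subset of the ambient coordinates $y_1,\dots,y_m$ yields a regular parameter sequence of $\scrO_{V,\eta}$, since the classes of the $y_i-\eta_i$ span the $d$-dimensional cotangent space. The locus in $V'$ on which a fixed such subset $J$ works is Zariski open, so the finitely many pieces $S'_J:=\{\eta\in S':\{y_j-\eta_j\}_{j\in J}\text{ is regular at }\eta\}$ cover $S'$, and at least one has full dimension $d$ and remains Zariski dense in $V'$. After relabeling I may assume $J=\{1,\dots,d\}$ and replace $S'$ by this piece. I would then fix a psd form $p\in\R[x_0,\dots,x_d]$ that is not a sum of squares (for example the Motzkin form, regarded as a form in $d+1$ variables), a proper real closed extension $R\supset\R$ (e.g.\ real Puiseux series) with canonical valuation ring $B$, and a nonzero infinitesimal $\epsilon\in\m_B$. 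The subspace $L\subset\R[V]$ would be the $\R$-span of the images of the monomials $y_1^{a_1}\cdots y_d^{a_d}$ with $1\le a_1+\cdots+a_d\le\deg(p)$, which is finite-dimensional.

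With these choices, for each $\eta\in S'$ the polynomial
$$f_{\eta,\epsilon}\>:=\>p\bigl(\epsilon,\,y_1-\eta_1,\dots,y_d-\eta_d\bigr)\>\in\>B[V]$$
lies in $L_B+B\cdot 1$ by expansion, and Lemma \ref{2ndconstnotsos} shows that $f_{\eta,\epsilon}$ is psd on $V(R)\supset S_R$ but is not a sum of squares in $B[V]/(M_{V,\eta})^N$ for any $N>\deg(p)$. Proposition \ref{2ndconstneu} then gives that $\ol{\conv(\varphi_L(S))}$ in $\A_L(\R)\cong\R^{\dim(L)}$ is not a spectrahedral shadow. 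To conclude, choosing a basis of $L$ among the spanning monomials and lifting it to a polynomial map $\tilde\varphi\colon\A^m\to\A^{\dim(L)}$ over $\R$ gives the desired $\varphi:=\tilde\varphi|_S$, satisfying $\varphi(S)=\varphi_L(S)$. The sole nontrivial point is the uniform choice of $d$ ambient coordinates as local parameters at every $\eta\in S'$; this is forced by the fact that the hypothesis of Proposition \ref{2ndconstneu} must hold for \emph{every} $\eta\in S'$, and it is resolved by the finite-covering argument above, made possible because only $\binom{m}{d}$ candidate subsets are available.
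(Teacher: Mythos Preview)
Your proof is correct and follows essentially the same route as the paper: both arguments reduce to Proposition \ref{2ndconstneu} via Lemma \ref{2ndconstnotsos}, constructing the bad element at each $\eta\in S'$ as $p(\epsilon,\,q_1-q_1(\eta),\dots)$ for a psd non-sos form $p$ and a fixed system $q_1,\dots,q_d$ of functions that serve as regular parameters throughout a Zariski-dense piece of $S$. The only organizational difference is that the paper fixes one smooth point $\xi$, takes arbitrary regular parameters $p_1,\dots,p_n$ at $\xi$, and passes to the Zariski-open locus where the shifted $p_i-p_i(\eta)$ remain a regular system, whereas you use ambient coordinates and a finite covering argument over the $\binom{m}{d}$ coordinate subsets to isolate one that works on a Zariski-dense $S'_J$; these are equivalent ways of securing the same uniformity.
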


\begin{proof}
Let $V\subset\A^m$ be the Zariski closure of $S$. Fix a point $\xi\in
S\cap V_\reg(\R)$ such that $\dim_\xi(S)\ge2$ and $S$ contains an
open neighborhood of $\xi$ in $V(\R)$.
Let $p_1,\dots,p_n\in\R[V]$ ($n\ge2$) be a regular sequence of
parameters for $\scrO_{V,\xi}$. Let $x=(x_1,\dots,x_n)$,
$y=(y_1,\dots,y_n)$ be tuples of variables, let $f\in\R[t,x]$ be a
form in $n+1$ variables that is psd but not a sum of squares, and put
$d=\deg(f)$. We can write
$$f(t,x+y)\>=\>\sum_{i=0}^dg_i(x)\,h_{d-i}(t,y)$$
where $g_i\in\R[x]$ and $h_i\in\R[t,y]$ are forms of degree~$i$
($i=0,\dots,d$). There is a Zariski open neighborhood $U\subset
V_\reg$ of $\xi$ such that, for any $\eta\in U(\R)$, the sequence
$p_i-p_i(\eta)$ ($i=1,\dots,n$) is a regular sequence of parameters
for $\scrO_{V,\eta}$.
Let $L\subset\R[V]$ be a finite-dimensional linear subspace that
contains $g_i(p_1,\dots,p_n)$ for $i=1,\dots,d$, and choose a real
closed field $R$ that properly contains $\R$. For any
$a=(a_0,\dots,a_n)\in B^{n+1}$, the element
$$q_a\>:=\>f(a_0,p_1+a_1,\dots,p_n+a_n)\>=\>\sum_{i=0}^d
g_i(p_1,\dots,p_n)\,h_{d-i}(a_0,\dots,a_n)$$
lies in $L_B+B1\subset B[V]$ and satisfies $q_a\ge0$ on $V(R)$.
Let $\eta\in U(\R)$, and put $a=(\epsilon,\,-p_1(\eta),\,\dots,\,
-p_n(\eta))\in B^{n+1}$ where $\epsilon\ne0$ is infinitesimal in~$R$.
Then $q_a\in B[V]$ is non-negative on $V(R)$, and $q_a$ is not a sum
of squares in $B[V]/(M_{V,\eta})^{d+1}$ by Lemma
\ref{2ndconstnotsos}.
By Proposition \ref{2ndconstneu}, this shows that
$\ol{\conv(\varphi(S))}$ is not a spectrahedral shadow.
\end{proof}

The previous examples already indicate that convex hulls of Veronese
sets typically fail to be spectrahedral shadows.
Specifically, we have:

\begin{cor}\label{converonese}%
Let $n,\,d$ be positive integers with $n\ge3$ and $d\ge4$, or with
$n=2$ and $d\ge6$. Let $m_1,\dots,m_N$ be the non-constant monomials
of degree $\le d$ in $(x_1,\dots,x_n)$ (so $N=\choose{n+d}n-1$).
Then for any \sa\ set $S\subset\R^n$ with non-empty interior, the
closed convex hull of
$$v(S)\>:=\>\Bigl\{\bigl(m_1(\xi),\dots,m_N(\xi)\bigr)\colon
\xi\in S\Bigr\}$$
in $\R^N$ fails to be a spectrahedral shadow.
\end{cor}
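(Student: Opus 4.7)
The plan is to recognise the map $v$ as the Veronese-type embedding $\varphi_L\colon\A^n\to\A_L$ of \ref{linsystmorph} associated with
\[
L \>=\> \{f\in\R[x_1,\dots,x_n]\colon\deg(f)\le d,\ f(0)=0\},
\]
for which $\{m_1,\dots,m_N\}$ is a basis, and then to apply Proposition \ref{2ndconstneu} with $V=V'=\A^n$ and $S'=S$. Because $S$ has non-empty interior in $\R^n$, it is Zariski dense in $\A^n$, so the set-up $(*)$ of \ref{standasspts} is satisfied.

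To verify the hypothesis of \ref{2ndconstneu} I would fix a proper real closed extension $R$ of $\R$ (for instance the field of real Puiseux series of \ref{rbetc}) with canonical valuation ring $B$ and a positive infinitesimal $\epsilon\in R$. By Hilbert's theorem on psd versus sos forms, the numerical conditions on $(n,d)$ are exactly what is needed to guarantee the existence of a homogeneous form $p\in\R[x_0,\dots,x_n]$ of some even degree $d'\le d$ with $d'\ge 4$ that is psd on $\R^{n+1}$ but is not a sum of squares in $\R[x_0,\dots,x_n]$. Concretely, the Motzkin form handles $n=2$, $d\ge 6$, and the Choi-Lam form (or any of its higher-dimensional analogues) handles $n\ge 3$, $d\ge 4$; if $d$ is odd one simply takes $d'=d-1$.

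Given any $\eta\in S$ I would then form
\[
f_\eta \>:=\> p(\epsilon,\ x_1-\eta_1,\ \dots,\ x_n-\eta_n) \>\in\> B[x_1,\dots,x_n].
\]
Expanding in the $x_i-\eta_i$ shows that $f_\eta$ is a $B$-linear combination of a constant and of monomials of positive total degree $\le d$ in $(x_1,\dots,x_n)$, so $f_\eta\in L_B+B\cdot 1$. The Tarski principle gives $f_\eta\ge 0$ on $R^n$, hence on $S_R$, because $p$ is psd on $\R^{n+1}$. Since $x_1-\eta_1,\dots,x_n-\eta_n$ form a regular system of parameters at $\eta\in\A^n(\R)$, Lemma \ref{trunclocringotimes} identifies $B[\A^n]/(M_{\A^n,\eta})^{d'+1}$ with $B[x_1,\dots,x_n]/\langle x_1,\dots,x_n\rangle^{d'+1}$, carrying the class of $f_\eta$ to the class of $p(\epsilon,x_1,\dots,x_n)$; and Proposition \ref{antwort} says exactly that the latter class is not a sum of squares.

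Once all hypotheses of \ref{2ndconstneu} are in hand, the conclusion that $\ol{\conv(v(S))}$ fails to be a spectrahedral shadow follows at once. I do not foresee any serious obstacle: the construction is essentially the parametric version of Examples \ref{ex27} and \ref{parsim2}, and the one point deserving explicit comment is the existence of the form $p$ for every $(n,d)$ in the stated range, which is precisely Hilbert's classical classification (the excluded cases of binary forms, quadratic forms and ternary quartics are exactly what the hypotheses rule out).
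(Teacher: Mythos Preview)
Your proof is correct and follows exactly the approach of the paper, which simply invokes Hilbert's theorem together with Propositions \ref{antwort} and \ref{2ndconstneu}; you have spelled out the details that the paper leaves implicit (in particular the reduction to Example \ref{ex27}'s template and the passage to an even degree $d'\le d$ when $d$ is odd, a point on which the paper's one-line proof is slightly imprecise).
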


\begin{proof}
Hilbert \cite{hi} showed that there exists a psd form $f$ of degree
$d$ in $n+1$ variables. So it suffices to apply Propositions
\ref{antwort} and \ref{2ndconstneu}.
\end{proof}

For positive integers $n,\,d$ let $\Sigma_{n,2d}$ (resp.\
$P_{n,2d}$) denote the cone of all degree~$2d$ forms in
$\R[x_1,\dots,x_n]$ that are sums of squares of forms (resp.\ that
are positive semidefinite).

\begin{cor}\label{psdcone}%
The psd cone $P_{n,2d}$ is a spectrahedral shadow
only in the
cases where $P_{n,2d}=\Sigma_{n,2d}$, i.e.\ only for $2d=2$ or
$n\le2$ or $(n,2d)=(3,4)$.
\end{cor}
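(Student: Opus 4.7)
I would prove both directions. The direction $P_{n,2d} = \Sigma_{n,2d} \Rightarrow$ spectrahedral shadow (covering the three Hilbert-exceptional cases $2d = 2$, $n \le 2$, $(n,2d) = (3,4)$) is immediate: $\Sigma_{n,2d}$ is the image of $\Sym_M^\plus(\R)$ (with $M = \binom{n+d-1}{d}$) under the Gram-matrix linear map $A \mapsto \sum_{\alpha,\beta} A_{\alpha\beta} x^{\alpha+\beta}$, so it is a spectrahedral shadow by definition. The real work is the converse: assuming $(n,2d)$ lies outside those three cases, I need to show $P_{n,2d}$ is not a spectrahedral shadow.

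My plan is to reduce to Corollary \ref{converonese}. I would first dehomogenize: $f(x_1,\dots,x_n) \leftrightarrow f(y_1,\dots,y_{n-1},1)$ is a linear bijection from degree-$2d$ forms in $n$ variables to polynomials of degree $\le 2d$ in $n-1$ variables, carrying $P_{n,2d}$ onto the cone $P^{\le 2d}_{n-1}$ of psd polynomials of degree $\le 2d$ on $\R^{n-1}$ (the top-homogeneous part of a psd polynomial is automatically psd, so there is no issue at $x_n = 0$). Since spectrahedral shadows are closed under linear images and preimages, the two cones are interchangeable for our purposes.

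Next I would dualize and slice. By closure of the spectrahedral-shadow class under polarity, $P^{\le 2d}_{n-1}$ is a spectrahedral shadow iff its dual cone $\ol{\cone(w(\R^{n-1}))} \subset \R^N$ is one, where $w(\xi) = (\xi^\alpha)_{|\alpha| \le 2d}$ lists the values at $\xi$ of all monomials of degree $\le 2d$ (constant $1$ included). Intersecting with the affine hyperplane ``constant-coordinate $= 1$'' is an affine-linear preimage and preserves the spectrahedral-shadow property; a short normalization argument identifies the slice with $\{1\} \times \ol{\conv(\tilde w(\R^{n-1}))}$, where $\tilde w$ is $w$ with the constant coordinate dropped. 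Hence if $P_{n,2d}$ is a spectrahedral shadow, so is $\ol{\conv(\tilde w(\R^{n-1}))}$.

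Finally I would apply Corollary \ref{converonese} with $n'=n-1$, $d'=2d$, $S=\R^{n-1}$. Its hypothesis ($n' \ge 3$ with $d' \ge 4$, or $n' = 2$ with $d' \ge 6$) matches exactly the assumption that $(n,2d)$ avoids the three Hilbert exceptions, and the conclusion is that $\ol{\conv(\tilde w(\R^{n-1}))}$ is not a spectrahedral shadow, contradicting the previous paragraph. The main technical step I expect is the affine-slice identification: the nontrivial inclusion ``$\ol{\cone(w(\R^{n-1}))} \cap \{c = 1\} \subset \{1\} \times \ol{\conv(\tilde w(\R^{n-1}))}$'' comes from the fact that any sequence $(c_k,z_k) \to (1,z)$ inside $\cone(w(\R^{n-1}))$ with $c_k > 0$ satisfies $z_k/c_k \in \conv(\tilde w(\R^{n-1}))$ and $z_k/c_k \to z$.
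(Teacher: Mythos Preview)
Your proof is correct and follows essentially the same route as the paper's: both arguments pass to the dual cone (using closure of spectrahedral shadows under polarity), identify it with a Veronese-type cone, take an affine hyperplane slice, and invoke Corollary~\ref{converonese} with parameters $(n',d')=(n-1,2d)$. The only cosmetic difference is order of operations---you dehomogenize $P_{n,2d}$ first and then dualize, whereas the paper dualizes $P_{n,2d}$ directly to the homogeneous degree-$2d$ Veronese cone and then dehomogenizes via the hyperplane section---and you spell out the slice identification more explicitly than the paper does.
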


\begin{proof}
It is well-known and easy to see that the dual $\Sigma_{n,2d}^*$ of
the sos cone is a spectrahedral cone. Therefore $\Sigma_{n,2d}$,
being closed, is a spectrahedral shadow.
Let $n,\,d$ be such
that $\Sigma_{n,2d}\ne P_{n,2d}$. By Hilbert's theorem \cite{hi}
quoted before, this happens precisely when $2d=2$ or $n\le2$ or
$(n,2d)=(3,4)$. The dual cone $P_{n,2d}^*$ can be identified with the
convex (or conical) hull of the image of the degree~$2d$ Veronese map
$$v_{n,2d}\colon\>\R^n\to\R^N,\quad\xi\mapsto
\bigl(\xi^\alpha\bigr)_{|\alpha|=2d}$$
where $N={\choose{n+2d-1}{n-1}}$ is the number of monomials of degree
$2d$ in $(x_1,\dots,x_n)$.
By \ref{converonese}, a suitable affine hyperplane section of this
cone fails to be a spectrahedral shadow.
So $P_{n,2d}^*$ itself cannot be a spectrahedral shadow,
and therefore neither can be $P_{n,2d}$.
\end{proof}


\section{Some open questions}

There are many obvious questions that remain open at this point. Here
are some that we consider as being particularly natural.

\begin{lab}\label{oq1}%
What is the smallest dimension of a convex \sa\ set without
semidefinite representation? The smallest dimension that we realize
in this paper is $14$ (see \ref{parsim2}). A more technical
construction gives examples of dimension~$11$. We expect that the
true answer should be much less. Is it three? Recall that the
Helton-Nie conjecture has been proved for subsets of $\R^2$
\cite{sch:sdpcurv}.
\end{lab}

\begin{lab}\label{oq2}%
Consider the necessary and sufficient condition \ref{mainhom} (or
\ref{maininhom}) for semidefinite
representability. Although we use it to
construct counter-examples to the Helton-Nie conjecture, it seems
that in concrete cases, the condition is often hard to decide. For a
prominent example let $C_n\subset\Sym_n(\R)$ be the \emph{copositive
cone}, consisting of all symmetric matrices $A$ such that
$x^tAx\ge0$ for all $x\in(\R_\plus)^n$ (see \cite{hiuse} for a
recent survey). For $n\ge5$ it is not known whether $C_n$ is a
spectrahedral shadow
(\cite[p.~135]{bpt}).
We were unable to apply criterion \ref{mainhom} to decide this
question.

Therefore we ask: What are alternative characterizations of
spectrahedral shadows
that are easier to work with?
\end{lab}

\begin{lab}\label{oq3}%
The results of Helton and Nie \cite{hn1}, \cite{hn2} guarantee the
existence of a semidefinite representation in a wide range of cases.
Specifically, if a compact convex \sa\ set $K\subset\R^n$ fails to
have a semidefinite representation, their results imply that the
boundary of $K$ must have a singular point, or must have zero
curvature somewhere (\cite{hn1}, conclusions, p.~790).

The counter-examples to the Helton-Nie conjecture constructed in this
paper are typically (closed) convex hulls of low-dimensional sets in
high-dimensional euclidean space. In particular, their boundaries
have singularities. It seems to be an open question whether there
exist counter-examples with smooth boundary.
\end{lab}

\begin{lab}\label{oq4}%
The generalized Lax conjecture (see \cite{vi} for an overview)
asserts that the hyperbolicity cone in $\R^n$ of any hyperbolic form
$f(x_1,\dots,x_n)$ is a spectrahedral cone. For $n=3$ this is in fact
a theorem, proved by Helton-Vinnikov \cite{hv} in 2007 in a
significantly stronger form. For $n\ge4$ however, it is not even
known in general whether every hyperbolicity cone is a spectrahedral
shadow,
although this holds when the cone is smooth (Netzer-Sanyal
\cite{ns}). Can one decide whether hyperbolicity cones are
spectrahedral shadows
using results of this paper?
\end{lab}


\end{document}